\newtheorem*{introthmA}{Theorem A}
\newtheorem*{introthmB}{Theorem B}
\DeclareSymbolFont{usualmathcal}{OMS}{cmsy}{m}{n}
\DeclareSymbolFontAlphabet{\mathcal}{usualmathcal}
\DeclareSymbolFont{letters}{OML}{txmi}{m}{it}
\DeclareMathSymbol{\alpha}{\mathord}{letters}{"0B}
\DeclareMathSymbol{\beta}{\mathord}{letters}{"0C}
\DeclareMathSymbol{\gamma}{\mathord}{letters}{"0D}
\DeclareMathSymbol{\delta}{\mathord}{letters}{"0E}
\DeclareMathSymbol{\epsilon}{\mathord}{letters}{"0F}
\DeclareMathSymbol{\zeta}{\mathord}{letters}{"10}
\DeclareMathSymbol{\eta}{\mathord}{letters}{"11}
\DeclareMathSymbol{\theta}{\mathord}{letters}{"12}
\DeclareMathSymbol{\iota}{\mathord}{letters}{"13}
\DeclareMathSymbol{\kappa}{\mathord}{letters}{"14}
\DeclareMathSymbol{\lambda}{\mathord}{letters}{"15}
\DeclareMathSymbol{\mu}{\mathord}{letters}{"16}
\DeclareMathSymbol{\nu}{\mathord}{letters}{"17}
\DeclareMathSymbol{\xi}{\mathord}{letters}{"18}
\DeclareMathSymbol{\pi}{\mathord}{letters}{"19}
\DeclareMathSymbol{\rho}{\mathord}{letters}{"1A}
\DeclareMathSymbol{\sigma}{\mathord}{letters}{"1B}
\DeclareMathSymbol{\tau}{\mathord}{letters}{"1C}
\DeclareMathSymbol{\upsilon}{\mathord}{letters}{"1D}
\DeclareMathSymbol{\phi}{\mathord}{letters}{"1E}
\DeclareMathSymbol{\chi}{\mathord}{letters}{"1F}
\DeclareMathSymbol{\psi}{\mathord}{letters}{"20}
\DeclareMathSymbol{\omega}{\mathord}{letters}{"21}
\DeclareMathSymbol{\varepsilon}{\mathord}{letters}{"22}
\DeclareMathSymbol{\vartheta}{\mathord}{letters}{"23}
\DeclareMathSymbol{\varpi}{\mathord}{letters}{"24}
\DeclareMathSymbol{\varrho}{\mathord}{letters}{"25}
\DeclareMathSymbol{\varsigma}{\mathord}{letters}{"26}
\DeclareMathSymbol{\varphi}{\mathord}{letters}{"27}
\DeclareMathSymbol{\Gamma}{\mathord}{letters}{"00}
\DeclareMathSymbol{\Delta}{\mathord}{letters}{"01}
\DeclareMathSymbol{\Theta}{\mathord}{letters}{"02}
\DeclareMathSymbol{\Lambda}{\mathord}{letters}{"03}
\DeclareMathSymbol{\Xi}{\mathord}{letters}{"04}
\DeclareMathSymbol{\Pi}{\mathord}{letters}{"05}
\DeclareMathSymbol{\Sigma}{\mathord}{letters}{"06}
\DeclareMathSymbol{\Upsilon}{\mathord}{letters}{"07}
\DeclareMathSymbol{\Phi}{\mathord}{letters}{"08}
\DeclareMathSymbol{\Psi}{\mathord}{letters}{"09}
\DeclareMathSymbol{\Omega}{\mathord}{letters}{"0A}
\DeclareMathSymbol{\upGamma}{\mathalpha}{operators}{"00}
\DeclareMathSymbol{\upDelta}{\mathalpha}{operators}{"01}
\DeclareMathSymbol{\upTheta}{\mathalpha}{operators}{"02}
\DeclareMathSymbol{\upLambda}{\mathalpha}{operators}{"03}
\DeclareMathSymbol{\upXi}{\mathalpha}{operators}{"04}
\DeclareMathSymbol{\upPi}{\mathalpha}{operators}{"05}
\DeclareMathSymbol{\upSigma}{\mathalpha}{operators}{"06}
\DeclareMathSymbol{\upUpsilon}{\mathalpha}{operators}{"07}
\DeclareMathSymbol{\upPhi}{\mathalpha}{operators}{"08}
\DeclareMathSymbol{\upPsi}{\mathalpha}{operators}{"09}
\DeclareMathSymbol{\upOmega}{\mathalpha}{operators}{"0A}
\newtheorem{theorem}{Theorem}[section]
\newtheorem{lemma}[theorem]{Lemma}
\newtheorem{proposition}[theorem]{Proposition}
\newtheorem{corollary}[theorem]{Corollary}
\theoremstyle{definition}
\newtheorem{definition}[theorem]{Definition}
\newtheorem{remark}[theorem]{Remark}
\newtheorem{example}[theorem]{Example}
\newtheorem*{question}{Question}
\newtheorem{ipg}[theorem]{}
\DeclareMathOperator{\Ext}{Ext}
\DeclareMathOperator{\Hom}{Hom}
\DeclareMathOperator{\Tor}{Tor}
\DeclareMathOperator{\Ch}{Ch}
\DeclareMathOperator{\Ho}{Ho}
\DeclareMathOperator{\FPD}{FPD}
\DeclareMathOperator{\FID}{FID}
\DeclareMathOperator{\FGID}{FGID}
\DeclareMathOperator{\FGPD}{FGPD}
\newcommand{\cat}[1]{\mathcal{#1}}           
\newcommand{\class}[1]{\mathcal{#1}}   
\newcommand{\coker}{\textnormal{coker}\mspace{3mu}}
\newcommand{\pd}[2][\mathcal A]{\textnormal{pd}_{#1}(#2)}
\newcommand{\rep}[1]{\textnormal{Rep}({#1})}
\newcommand{\symbolp}{\pi}
\newcommand{\symboli}{\iota}
\newcommand{\spacep}{\mspace{0.75mu}}
\newcommand{\spacei}{\mspace{1mu}}
\newcommand{\Up}[1][\!\cat{A}]{\cat{U}^{\spacep\symbolp}_{#1}}
\newcommand{\Ui}[1][\!\cat{A}]{\cat{U}^{\spacei\symboli}_{#1}}
\newcommand{\Cp}[1][\!\!\cat{A}]{\cat{C}^{\spacep\symbolp}_{#1}}
\newcommand{\Ci}[1][\!\!\cat{A}]{\cat{C}^{\spacei\symboli}_{#1}}
\newcommand{\Wp}[1][\!\!\cat{A}]{\cat{W}^{\spacep\symbolp}_{#1}}
\newcommand{\Wi}[1][\!\!\cat{A}]{\cat{W}^{\spacei\symboli}_{#1}}
\newcommand{\Fp}[1][\!\!\!\cat{A}]{\cat{F}^{\spacep\symbolp}_{#1}}
\newcommand{\Fi}[1][\!\!\!\cat{A}]{\cat{F}^{\spacei\symboli}_{#1}}
\newcommand{\rightperp}[1]{#1^{\perp}}
\newcommand{\leftperp}[1]{{}^\perp #1}
\newcommand{\GProj}{\operatorname{GProj}}
\newcommand{\GpdA}{\mathrm{Gpd_{\mathcal A}}}
\newcommand{\pdA}{\mathrm{pd_{\mathcal A}}}
\newcommand{\idA}{\mathrm{id_{\mathcal A}}}
\newcommand{\GInj}{\operatorname{GInj}}
\newcommand{\GidA}{\mathrm{Gid_{\mathcal A}}}
\newcommand{\sGProj}{\underline{\mathrm{GPro}}\mathrm{j}}
\newcommand{\sGInj}{\underline{\mathrm{GIn}}\mathrm{j}}
\begin{document}

\title{Quillen equivalences for stable categories}

\author{Georgios Dalezios}
\address{(G.D.) Departamento de Matem\'aticas, Universidad de Murcia, 30100 Murcia, Spain}
\email{georgios.dalezios@um.es}

\author{Sergio Estrada}
\address{(S.E.) Departamento de Matem\'aticas, Universidad de Murcia, 30100 Murcia, Spain}
\email{sestrada@um.es}

\author{Henrik Holm}
\address{(H.H.) Department of Mathematical Sciences, University of Copenhagen, Universitets\-parken 5, 2100 Copenhagen {\O}, Denmark}
\email{holm@math.ku.dk}
\urladdr{http://www.math.ku.dk/\~{}holm/}

\thanks{The first author is supported by the Fundaci\'{o}n S\'{e}neca of Murcia 19880/GERM/15. The second author is supported by the grant MTM2016-77445-P and FEDER funds and the grant 18394/JLI/13 by the Fundaci\'on S\'eneca-Agencia de Ciencia y Tecnolog\'{\i}a de la Regi\'on de Murcia in the framework of III PCTRM 2011-2014 }
\date{}

\subjclass[2010]{18E10, 18E30, 18G55, 13D02, 16E05.}


\begin{abstract}
For an abelian category $\class A$ we investigate when the stable categories $\sGProj(\class A)$ and $\sGInj(\class A)$ are triangulated equivalent. To this end, we realize these stable categories as homotopy categories of certain (non-trivial) model categories and give conditions on $\cat{A}$ that ensure the existence of a Quillen equivalence between the model categories in question. We also study when such a Quillen equivalence transfers from $\class A$ to categories naturally associated to $\class A$, such as $\mathrm{Ch}(\cat{A})$, the category of chain complexes in $\cat{A}$, or $\mathrm{Rep}(Q,\cat{A})$, the category $\cat{A}$-valued representations of a quiver $Q$.
\end{abstract}

\maketitle

\section{Introduction}

Over an Iwanaga--Gorenstein ring $A$, that is, a ring which is noetherian and has finite injective dimension from both sides, the category $\mathrm{MCM}(A)$ of (finitely generated) maximal Cohen--Macaulay $A$-modules\footnote{\,In the important special case where $A$ is a quasi-Frobenius ring, for example, if $A=kG$ is the group algebra of a finite group $G$ with coefficients in a field $k$, the category $\mathrm{MCM}(A)$ is just the category $\mathrm{mod}(A)$ of all finitely generated $A$-modules.} is a Frobenius category in which the \mbox{projective-injective}~objects are precisely the finitely generated projective $A$-modules. The associated stable category $\underline{\mathrm{MCM}}(A)$ is therefore triangulated, and a classic result of Buchweitz \cite[Thm.~4.4.1]{ROB86} shows that $\underline{\mathrm{MCM}}(A)$ is triangulated equivalent to the singularity category\footnote{\,The singularity category \smash{$\mathcal{D}_\mathrm{sg}(A)$} is defined to be the Verdier quotient \smash{$\mathcal{D}^\mathrm{b}(A)/\mathcal{D}^\mathrm{b}_\mathrm{perf}(A)$} of the bounded derived category \smash{$\mathcal{D}^\mathrm{b}(A)$}, whose objets are complexes of $A$-modules with bounded and finitely generated homology, by the subcategory \smash{$\mathcal{D}^\mathrm{b}_\mathrm{perf}(A)$}, whose objects are isomorphic (in \smash{$\mathcal{D}^\mathrm{b}(A)$}) to a perfect complex, that is, to a bounded complex of finitely generated projective $A$-modules. The name \emph{singularity category} and the symbol \smash{$\mathcal{D}_\mathrm{sg}(A)$} seem to be the popular choices nowadays, however, in the work of Buchweitz \cite[Def.~1.2.2]{ROB86}, this category is called the \emph{stabilized derived category} and denoted by \smash{\underline{$\mathcal{D}^\mathrm{b}(A)$}}, and in the work of Orlov \cite{Orlov2004}, it is called the \emph{triangulated category of singularities} and denoted by \smash{$\mathcal{D}_\mathrm{sg}(A)$}.} $\mathcal{D}_\mathrm{sg}(A)$, which is an important matematical object that has been studied by many authors; see \cite{MR1450996,MR2846489,MR3290681,MR2680200}. 

If $A$ is not Iwanaga--Gorenstein, then the category $\mathrm{MCM}(A)$ is, in general,  not Frobenius. However, over any ring $A$ one can always consider the category $\GProj(A)$ of so-called Gorenstein projective modules (which are not assumed to be finitely generated); this category is always Frobenius and the associated stable category $\sGProj(A)$ is triangulated. In the case where $A$ is Iwanaga--Gorenstein, an $A$-module is maximal Cohen--Macaulay if and only if it is finitely generated and Gorenstein projective, and hence $\underline{\mathrm{MCM}}(A)$ can be identified with the finitely generated modules in $\sGProj(A)$. This explains the interest in the category $\sGProj(A)$ for general ring $A$. Its injective counterpart $\sGInj(A)$, the stable category of Gorenstein injective $A$-modules, is equally important and has been studied in e.g \cite{Benson-Krause-kG,HKr05}.

Our work is motivated by a recent result of Zheng and Huang \cite{zheng-huang-triangulated-equivalences} which asserts that for many rings $A$, the categories $\sGProj(A)$ and $\sGInj(A)$ are equivalent as triangulated categories. As it makes sense to consider the stable categories $\sGProj(\cat{A})$ and $\sGInj(\cat{A})$ for any bicomplete abelian category $\cat{A}$ with enough projectives and injectives (see Section~\ref{Preliminaries} for details), the following question naturally arises:
 
\begin{question}
For which abelian categories $\cat{A}$ (assumed to be bicomplete with enough projectives and injectives) are $\sGProj(\cat{A})$ and $\sGInj(\cat{A})$ equivalent as triangulated categories?
\end{question}

Every Frobenius category $\cat{E}$, in particular, $\GProj(\class A)$ and $\GInj(\class A)$, can be equipped with a canonical model structure which has the property that the associated homotopy category $\Ho(\cat{E})$ is equivalent to the stable category $\underline{\cat{E}}$; see e.g.~\cite[Prop.~4.1]{Gillespie2013}. Thus, if the Frobenius categories $\GProj(\class A)$ and $\GInj(\class A)$, equipped with these canonical model structures, happen to be Quillen equivalent, then we get an affirmative answer to the question above. However, the model categories $\GProj(\class A)$ and $\GInj(\class A)$, and even the underlying ordinary categories, will rarely be (Quillen) equivalent. In this paper, we consider instead the categories
\begin{displaymath}
\Up[]= \left\{ M\in \class A \ | \ \GpdA(M)<\infty\right\}
  \quad \text{and} \quad 
\Ui[]=\left\{ N\in \class A \ | \ \GidA(N)<\infty\right\}
\end{displaymath}
and show in Theorems~\ref{thmUp} and \ref{thmUi} that $\Up[]$ and $\Ui[]$ can be equipped with model structures for which the associated homotopy categories  $\Ho(\Up[])$ and $\Ho(\Ui[])$ are the stable categories $\sGProj(\class A)$ and $\sGInj(\class A)$. The advantage of having these realizations of the stable categories is that in several cases the model categories $\Up[]$ and $\Ui[]$ will be Quillen equivalent---even though $\GProj(\class A)$ and $\GInj(\class A)$ are not---and in such cases we therefore get an affirmative answer (for a strong reason) to the question above\footnote{\,In general, we do not expect every (triangulated) equivalence between $\sGProj(\class A)$ and $\sGInj(\class A)$, if such an equivalence even exist, to be induced from a Quillen equivalence between model categories. Indeed, it is well-known~that there are examples of non Quillen equivalent model categories with equivalent homotopy categories.}. To investigate when $\Up[]$~and~$\Ui[]$ will be Quillen equivalent, we introduce the notion of a Sharp--Foxby adjunction (Definition~\ref{Sharp-Foxby}). We prove in Theorem~\ref{Ho} and Corollary~\ref{cor:SF-imples-GProj-GInj} that if $\cat{A}$ ad\-mits such an adjuntion, then $\Up[]$ and $\Ui[]$ will be Quillen equivalent:

\begin{introthmA}
 A Sharp--Foxby adjunction $(S,T)$ on $\class A$ induces a Quillen equivalence between the model categories $\Up[]$ and $\Ui[]$. Thus the total (left/right) derived functors of $S$ and $T$ yield an adjoint equivalence of the corresponding homotopy categories,
\begin{equation*}
  \xymatrix@C=3pc{
     \sGProj(\cat{A}) \simeq \mathrm{Ho}(\Up[]) \ar@<0.7ex>[r]^-{\mathbf{L}S} & 
     \mathrm{Ho}(\Ui[]) \simeq \sGInj(\cat{A}) \ar@<0.7ex>[l]^-{\mathbf{R}T}
  }\!.
\end{equation*} 
In fact, this is an equivalence of triangulated categories.
\end{introthmA}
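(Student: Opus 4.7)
The plan is to verify the standard criteria for a Quillen equivalence in three stages: first, that the Sharp--Foxby adjunction $(S,T)$ restricts to an adjunction between the full subcategories $\Up[]$ and $\Ui[]$; second, that this restricted adjunction is a Quillen adjunction with respect to the model structures produced in Theorems~\ref{thmUp} and~\ref{thmUi}; and third, that it is in fact a Quillen equivalence. The final clause of the theorem---that the induced equivalence of homotopy categories is triangulated---will follow once the homotopy categories are identified with the stable categories of the Frobenius categories $\GProj(\class A)$ and $\GInj(\class A)$, and the total derived functors of $S$ and $T$ are verified to be exact functors of triangulated categories.

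I would begin by unpacking Definition~\ref{Sharp-Foxby} to extract the following features, which should be built into the notion of a Sharp--Foxby adjunction in close analogy with the prototypical example $S = D\otimes^{\mathbf L}(-)$, $T = \mathbf R\Hom(D,-)$ for a dualizing object~$D$: the functor $S$ should send $\Up[]$ into $\Ui[]$ and $T$ should send $\Ui[]$ back into $\Up[]$ (this is the Foxby-type reformulation that objects of finite Gorenstein projective dimension are precisely those in the ``Auslander class'', and dually); $S$ should be exact on $\Up[]$ and $T$ exact on $\Ui[]$; and the adjunction should interchange projectives with injectives, as well as modules of finite projective dimension with modules of finite injective dimension. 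Granted these properties, the Quillen adjunction condition is routine: by the descriptions of the model structures, the cofibrant objects of $\Up[]$ are the Gorenstein projectives and the trivial objects are those of finite projective dimension, with the dual picture on $\Ui[]$; the exactness of $S$ on $\Up[]$ together with the preservation statements above then shows that $S$ sends (trivial) cofibrations to (trivial) cofibrations.

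To promote this to a Quillen equivalence, I would argue that for a cofibrant $X \in \Up[]$ and a fibrant $Y \in \Ui[]$, a map $SX \to Y$ is a weak equivalence if and only if its adjoint $X \to TY$ is. Since weak equivalences in $\Up[]$ (resp.\ $\Ui[]$) are exactly the maps that become isomorphisms in $\sGProj(\class A)$ (resp.\ $\sGInj(\class A)$), this reduces to showing that the unit $\eta_X\colon X \to TSX$ is a weak equivalence whenever $X$ is Gorenstein projective, and dually that the counit $\varepsilon_Y\colon STY \to Y$ is a weak equivalence whenever $Y$ is Gorenstein injective. This is the essential content of Foxby duality in this abstract setting, and is precisely what the Sharp--Foxby axioms should be engineered to deliver. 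The main obstacle, therefore, is to extract from the abstract definition that $\eta_X$ and $\varepsilon_Y$ become isomorphisms in the corresponding stable categories on these subcategories---once this is in hand, the exactness of the derived functors together with the triangulated structure of the stable categories upgrades the adjoint equivalence to an equivalence of triangulated categories.
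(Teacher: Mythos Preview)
Your outline for the Quillen equivalence is essentially the paper's approach, and in fact the axioms in Definition~\ref{Sharp-Foxby} give you more than you ask for: conditions (SF5) and (SF6) assert outright that the unit $\eta_X$ and counit $\varepsilon_Y$ are \emph{isomorphisms} (not merely weak equivalences) for every $X\in\Up[]$ and $Y\in\Ui[]$. So what you flag as ``the main obstacle'' is actually immediate from the definition; the paper packages this step via Proposition~\ref{Dalezios}, but the content is the same.

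The genuine gap is in your last sentence. You write that the triangulated equivalence ``will follow once \ldots\ the total derived functors of $S$ and $T$ are verified to be exact functors of triangulated categories,'' and then never return to this verification. This is precisely the part of the proof that requires real work. The point is that $S$ does \emph{not} carry $\GProj(\cat{A})$ into $\GInj(\cat{A})$: for a Gorenstein projective $G$, the object $SG$ lies only in $\Ui[]$, and one must take a fibrant replacement to land in $\GInj(\cat{A})$. Thus one cannot simply say that $S$ restricts to an exact functor between the Frobenius categories, and there is no off-the-shelf theorem guaranteeing that a Quillen equivalence between exact model categories (with triangulated homotopy categories arising from Frobenius structures as in~\ref{triangulated}) automatically yields a \emph{triangulated} equivalence. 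The paper devotes the bulk of its proof of Theorem~\ref{Ho} to this issue: given a standard triangle in $\sGProj(\cat{A})$ coming from a pushout diagram, it applies $S$, then carefully constructs compatible Gorenstein injective replacements (via a $3\times 3$ diagram and another pushout) to exhibit the image as isomorphic in $\Ho(\Ui[])$ to a standard triangle in $\sGInj(\cat{A})$. You would need to supply an argument of this kind, or else invoke a general result (with reference) that exact Quillen functors between hereditary exact model categories induce triangulated functors on homotopy categories.
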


The choice to work with the categories $\Up[]$ and $\Ui[]$ is historically motivated by classic results in commutative algebra by Sharp \cite{RYS72} and Foxby \cite{HBF72} . In the language of this paper, the results can be phrased as follows: If $A$ is a Cohen--Macaulay ring with a dualizing module $D$, then the functors $S=D\otimes_A-$ and $T=\Hom_A(D,-)$ constitute a Sharp--Foxby adjunction on \mbox{$\cat{A}=\mathrm{Mod}(A)$}; see Example~\ref{Auslander-Bass} for details. Thus, for such rings Theorem~A improves the previously mentioned result of Zheng and Huang \cite{zheng-huang-triangulated-equivalences} to a triangulated equivalence between $\sGProj(A)$ and $\sGInj(A)$ induced by a Quillen equivalence.

In Sections \ref{sec:Ch} and \ref{sec:Rep} we investigate to what extend a Sharp--Foxby adjunction on a category $\cat{A}$ (and hence also a Quillen equivalence between the model categories $\Up[]$ and $\Ui[]$, see Theorem~A) transfers to categories naturally constructed from $\class A$, such as $\mathrm{Ch}(\cat{A})$, the category of chain complexes in $\cat{A}$, or $\mathrm{Rep}(Q,\cat{A})$, the category $\cat{A}$-valued representations of a quiver $Q$. Theorems~\ref{Ch} and \ref{Rep} combined yield the following result.

\begin{introthmB}
Assume that $(S,T)$ is a Sharp--Foxby adjunction on $\cat{A}$; in particular, $\sGProj(\cat{A})$ and $\sGInj(\cat{A})$ are equivalent as triangulated categories by Theorem~A. Assume furthermore that the finitistic projective and the finitistic injective dimensions of $\cat{A}$ are finite.
 
 If $\cat{B}=\mathrm{Ch}(\cat{A})$ or if $\cat{B}=\mathrm{Rep}(Q,\cat{A})$ for a left and right rooted quiver $Q$, then degreewise/vertexwise application of $S$ and $T$ yields a Sharp--Foxby adjunction on $\cat{B}$; in particular, $\sGProj(\cat{B})$ and $\sGInj(\cat{B})$ are  equivalent as triangulated categories.
 \end{introthmB}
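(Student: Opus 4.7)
The plan is to verify, axiom by axiom, that the degreewise/vertexwise extensions of $(S, T)$ to $\cat{B}$ again form a Sharp--Foxby adjunction in the sense of Definition~\ref{Sharp-Foxby}; the conclusion about $\sGProj(\cat{B})$ and $\sGInj(\cat{B})$ then follows immediately from Theorem~A applied to $\cat{B}$. Since $(S, T)$ is an adjoint pair on $\cat{A}$, the extended functors automatically form an adjoint pair on $\mathrm{Ch}(\cat{A})$ and on $\mathrm{Rep}(Q, \cat{A})$; the real content is to check the axioms concerning finite projective/injective dimensions and the compatibility of the unit and counit with the relevant Auslander--Bass classes. The finitistic dimension hypotheses $\FPD(\cat{A}), \FID(\cat{A}) < \infty$ are precisely what provide the \emph{uniform} numerical bounds needed to transfer finite homological dimensions from $\cat{A}$ to $\cat{B}$.

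For $\cat{B} = \mathrm{Ch}(\cat{A})$, I would first recall that projective (respectively, injective) objects in $\mathrm{Ch}(\cat{A})$ are precisely the contractible complexes of the form $\bigoplus_{n} D^n(P_n)$ with each $P_n$ projective (respectively, injective) in $\cat{A}$, where $D^n(X)$ denotes the disk complex concentrated in degrees $n$ and $n-1$. Applying $S$ degreewise to such a projective complex yields a contractible complex whose components $SP_n$ all have injective dimension bounded uniformly by $\FID(\cat{A})$, since the Sharp--Foxby property forces each $SP_n$ to have finite injective dimension in $\cat{A}$. A direct homological calculation then bounds the injective dimension of the resulting object in $\mathrm{Ch}(\cat{A})$ in terms of $\FID(\cat{A})$, and the symmetric statement for $T$ uses $\FPD(\cat{A}) < \infty$. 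The remaining Sharp--Foxby axioms (isomorphism of unit and counit on the relevant subclasses, together with the associated $\mathrm{Tor}$/$\mathrm{Ext}$-vanishing) are degreewise in character and follow at once from the corresponding facts on $\cat{A}$.

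For $\cat{B} = \mathrm{Rep}(Q, \cat{A})$ with $Q$ left and right rooted, projective and injective representations no longer coincide with vertexwise projective/injective ones; instead they are obtained from projective (respectively, injective) objects of $\cat{A}$ by a transfinite process along outgoing (respectively, incoming) paths, and the rootedness hypothesis is precisely what ensures that this process terminates and yields a well-behaved resolution theory. I would adopt the explicit description of projective and injective representations of rooted quivers due to Enochs--Estrada--Garc\'ia Rozas, and then show by transfinite induction along the rooting that the vertexwise image under $S$ of a projective representation has finite injective dimension in $\mathrm{Rep}(Q, \cat{A})$, the bound being controlled again by $\FID(\cat{A})$ together with combinatorial data of $Q$; the statement for $T$ is dual. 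The remaining axioms are verified vertexwise and inherit directly from the corresponding properties on $\cat{A}$.

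The main obstacle, I expect, is the quiver case: one must carefully analyse the structure of projective and injective representations of rooted quivers in the extended setting, and the transfinite induction requires that the bound on homological dimensions does not deteriorate along infinite paths or across limit ordinals---this is where the combined finiteness of $\FPD(\cat{A})$ and $\FID(\cat{A})$ is essential. The chain complex case, by comparison, is essentially routine once the description of projective/injective complexes and the finitistic dimension bounds are in hand.
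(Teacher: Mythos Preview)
Your proposal misidentifies where the real work lies, and in doing so leaves a gap. You focus on showing that the degreewise/vertexwise extension of $S$ takes \emph{projective} objects of $\cat{B}$ to objects of finite injective dimension, and then declare the remaining axioms ``degreewise in character''. But axiom (SF1) in Definition~\ref{Sharp-Foxby} requires $S$ to send $\Up[\cat{B}]$ (objects of finite \emph{Gorenstein} projective dimension) to $\Ui[\cat{B}]$; knowing what $S$ does to projectives of $\cat{B}$ tells you about $\Wp[\cat{B}] \to \Wi[\cat{B}]$, not about $\Up[\cat{B}] \to \Ui[\cat{B}]$. Nor can you reduce the Gorenstein case to the projective one by taking a Gorenstein projective resolution and applying $S$, since exactness of $S$ on such a sequence is precisely (SF2), which you have not yet verified. (Also, there is no ``Tor/Ext-vanishing'' among the axioms of Definition~\ref{Sharp-Foxby}; you appear to have in mind the Auslander/Bass class formulation of Example~\ref{Auslander-Bass}, which is a theorem in the module case, not the definition used here.)

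The paper's route avoids all of this by first establishing, in Propositions~\ref{Ch-p}/\ref{Ch-i} and~\ref{Rep-p}/\ref{Rep-i}, intrinsic characterizations of the four classes $\Up[\cat{B}],\Cp[\cat{B}],\Wp[\cat{B}],\Fp[\cat{B}]$ (and their injective counterparts) in terms of the corresponding classes in $\cat{A}$: for instance, $X\in\Up[\mathrm{Ch}(\cat{A})]$ if and only if every $X_n\in\Up$, and $X\in\Up[\mathrm{Rep}(Q,\cat{A})]$ if and only if every $X(v)\in\Up$. The finitistic dimension hypotheses enter exactly here, to produce the uniform bound needed for the ``if'' directions. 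Once these characterizations are in hand, \emph{all six} axioms (SF1)--(SF6) for $(\bar{S},\bar{T})$ follow immediately and uniformly from the corresponding axioms for $(S,T)$ on $\cat{A}$, since membership in the relevant classes, exactness of short exact sequences, and invertibility of unit/counit are all tested degreewise/vertexwise. No separate analysis of projective or injective objects of $\cat{B}$, and no transfinite induction along the quiver, is required at that stage.
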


\section{Preliminaries}
\label{Preliminaries}

Throughout this paper, $\cat{A}$ denotes any bicomplete abelian category with enough projectives and enough injectives.

Gorenstein projective and Gorenstein injective modules (over any ring) were defined by Enochs and Jenda \cite[\S2]{EEnOJn95b}, but the definition works for objects in any abelian category:

\begin{definition}
  \label{Gorenstein-objects}
  An acyclic (= exact) complex $P = \cdots\to P_1\to P_0\to P_{-1}\to \cdots$ of projective objects in $\cat{A}$ is called {\it totally acyclic} if for any projective object $Q$ in $\cat{A}$ the complex 
\begin{displaymath}  
  \mathrm{Hom}_\cat{A}(P,Q) = \ \cdots\longrightarrow \mathrm{Hom}_\cat{A}(P_{-1},Q)\longrightarrow \mathrm{Hom}_\cat{A}(P_0,Q)\longrightarrow \mathrm{Hom}_\cat{A}(P_{1},Q)\longrightarrow \cdots
\end{displaymath}  
is acyclic. An object $G$ in $\cat{A}$ is called {\it Gorenstein projective} if it is a cycle of such a totally acyclic complex of projectives, that is, if $G=\mathrm{Z}_j(P)$ for some integer $j$. We write~$\GProj(\cat{A})$ for the full subcategory of $\cat{A}$ consisting of all Gorenstein projective objects.

Dually, an acyclic complex $I = \cdots\to I_1\to I_0\to I_{-1}\to \cdots$ of injective objects in $\cat{A}$ is called {\it totally acyclic} if for any injective object $E$ in $\cat{A}$ the complex 
\begin{displaymath}  
  \mathrm{Hom}_\cat{A}(E,I) = \ \cdots\longrightarrow \mathrm{Hom}_\cat{A}(E,I_{1})\longrightarrow \mathrm{Hom}_\cat{A}(E,I_0)\longrightarrow \mathrm{Hom}_\cat{A}(E,I_{-1})\longrightarrow \cdots
\end{displaymath}  
is acyclic. An object $H$ in $\cat{A}$ is called {\it Gorenstein projective} if it is a cycle of such a totally acyclic complex of injectives, that is, if $H=\mathrm{Z}_j(I)$ for some integer $j$. We write~$\GInj(\cat{A})$ for the full subcategory of $\cat{A}$ consisting of all Gorenstein injective objects.

The {\it Gorenstein projective dimension}, $\GpdA(M)$, of an object $M$ in $\cat{A}$ is defined by de\-cla\-ring that one has $\GpdA(M)\leqslant n$ (for $n\in\mathbb{N}_0$) if and only if there exists an exact sequence $0\to G_n\to G_{n-1}\to \cdots\to G_0\to M\to 0$ in $\cat{A}$ with $G_0,\ldots, G_n \in \GProj(\cat{A})$. The {\it Gorenstein injective dimension}, $\GidA(M)$, of $M$ is defined analogously.

\end{definition}

Recall that a \emph{Frobenius category} is an exact category $\cat{E}$ with enough (relative) projectives and enough (relative) injectives and where the classes of projectives and injectives coincide; such objects are called \emph{projective-injective} (or just \emph{pro-injective}) objects. The \emph{stable category} $\underline{\cat{E}}$ is the quotient category
\mbox{$\cat{E}/\mspace{-4mu}\sim$} where the relation ``$\sim$'' is defined by \mbox{$f \sim g$} (here $f$ and $g$ are parallel morphisms in $\cat{E}$) if $f-g$ factors through a projective-injective object. The category $\underline{\cat{E}}$ is triangulated as described in Happel \cite[Chap.~I\S2]{HapT} (see also \ref{triangulated}).

The following result is well-known, but for completeness we include a short proof.

\begin{proposition}  
  \label{GProj-is-Frobenius}
  The category $\GProj(\cat{A})$ is Frobenius and the projec\-tive-injective objects herein are the projective objects in $\cat{A}$. Thus, the stable category $\sGProj(\cat{A})$ is triangulated.
  
  The category $\GInj(\cat{A})$ is Frobenius and the projec\-tive-injective objects herein are the injective objects in $\cat{A}$. Thus, the stable category $\sGInj(\cat{A})$ is triangulated.  
\end{proposition}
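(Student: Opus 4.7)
The plan is to prove the statement for $\GProj(\cat{A})$; the claims for $\GInj(\cat{A})$ will then follow by the formally dual argument, i.e., by passing to $\cat{A}^{\mathrm{op}}$. First I would equip $\GProj(\cat{A})$ with the exact structure inherited from the ambient abelian category, declaring the conflations to be the short exact sequences in $\cat{A}$ all of whose terms lie in $\GProj(\cat{A})$. For this to define an exact structure, $\GProj(\cat{A})$ must be closed under extensions in $\cat{A}$, that is, in any short exact sequence $0 \to G_1 \to X \to G_2 \to 0$ with $G_1,G_2 \in \GProj(\cat{A})$ the middle term $X$ is again Gorenstein projective. This extension-closedness is the central technical input and the step I expect to be the main obstacle; the classical proof, transcribed from the module setting, uses the Horseshoe Lemma to splice totally acyclic complexes of projectives for $G_1$ and $G_2$ into such a complex for $X$, and verifies $\Hom_{\cat{A}}(-,Q)$-acyclicity for every projective $Q$ by chasing the long exact sequence in $\Ext_{\cat{A}}^{\ast}(-,Q)$ together with the vanishing $\Ext_{\cat{A}}^{\geq 1}(G_i,Q) = 0$, itself a consequence of dimension shifting along the defining totally acyclic complexes.

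Next I would identify the projective--injective objects of $\GProj(\cat{A})$. Every projective $P$ of $\cat{A}$ lies in $\GProj(\cat{A})$, witnessed by the trivially totally acyclic complex having $P$ in degrees $0$ and $1$ and the identity as differential. The same dimension-shifting argument then gives $\Ext_{\cat{A}}^{\geq 1}(G,P) = 0$ for all $G \in \GProj(\cat{A})$, which forces every conflation in $\GProj(\cat{A})$ with right-hand term $P$ to split; thus $P$ is injective in the exact category $\GProj(\cat{A})$, and projective there for trivial reasons. Conversely, given a projective--injective object $Q$ of $\GProj(\cat{A})$, I would choose a totally acyclic complex $P_{\bullet}$ of projectives with $Q = \mathrm{Z}_0(P_{\bullet})$, producing the conflation $0 \to \mathrm{Z}_1(P_{\bullet}) \to P_0 \to Q \to 0$ in $\GProj(\cat{A})$ (the kernel is Gorenstein projective as another cycle of $P_{\bullet}$); by injectivity of $Q$ this splits, exhibiting $Q$ as a direct summand of the projective $P_0$, and hence as a projective object of $\cat{A}$.

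Finally, the Frobenius axiom that $\GProj(\cat{A})$ admits enough relative projectives and enough relative injectives is read off directly from the same totally acyclic complex $P_{\bullet}$ for an arbitrary $G = \mathrm{Z}_0(P_{\bullet})$: the conflations $0 \to \mathrm{Z}_1(P_{\bullet}) \to P_0 \to G \to 0$ and $0 \to G \to P_{-1} \to \mathrm{Z}_{-2}(P_{\bullet}) \to 0$ have their outer terms in $\GProj(\cat{A})$ and middle term projective in $\cat{A}$, which supplies, respectively, the required admissible epimorphism from, and admissible monomorphism into, a projective--injective object. With this $\GProj(\cat{A})$ is Frobenius, and the triangulated structure on $\sGProj(\cat{A})$ then follows from Happel's theorem cited just before the proposition.
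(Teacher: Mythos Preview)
Your approach is essentially the same as the paper's: equip $\GProj(\cat{A})$ with the inherited exact structure via extension-closedness, identify the categorical projectives of $\cat{A}$ as the projective-injective objects using the $\Ext^1$-vanishing $\Ext^1_{\cat{A}}(G,P)=0$, and read off enough relative projectives and injectives directly from the two conflations coming from the defining totally acyclic complex. One small slip: the conflation $0 \to \mathrm{Z}_1(P_\bullet) \to P_0 \to Q \to 0$ splits by the relative \emph{projectivity} of $Q$ (it sits in the cokernel position), not by its injectivity---injectivity is what splits the other conflation $0 \to Q \to P' \to H' \to 0$; since you assume $Q$ is projective-injective the conclusion is unaffected, but you have invoked the wrong half of the hypothesis.
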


\begin{proof}
  We only show the claims about the category $\GProj(\cat{A})$, as the claims about $\GInj(\cat{A})$ are proved similarly. The proof only uses basic properties of Gorenstein projective objects. In the case of modules, that is, if $\cat{A}=\mathrm{Mod}(A)$ for a ring $A$, these properties are recorded in \cite{HHl04a}, however, the reader easily verifies that the same properties hold for Gorenstein projective objects in any abelian category $\cat{A}$ with enough projectives.
  
  First of all, by \cite[Thm.~2.5]{HHl04a} the class $\GProj(\cat{A})$ is an additive extension-closed subcategory of the abelian category $\cat{A}$, and thus $\GProj(\cat{A})$ is an exact category. Clearly, every (categorical) projective object $P$ in $\cat{A}$ is a (relative) projective object in $\GProj(\cat{A})$, but it is also (relative) injective since every short exact sequence $0 \to P \to G \to G' \to 0$ in $\cat{A}$ with $G,G' \in \GProj(\cat{A})$ splits; indeed by \cite[Prop.~2.3]{HHl04a} one has $\Ext^1_{\cat{A}}(G',P)=0$. By the definition of Gorenstein projective objects, every $G \in \GProj(\cat{A})$ fits into short exact sequences $0 \to H \to P \to G \to 0$ and $0 \to G \to P' \to H' \to 0$ in $\cat{A}$ where $P,P'$ are (categorical) projective and $H,H'$ are Gorenstein projective. It follows that if $G$ is (relative) projective or (relative) injective, then $G$ is a direct summand of a (categorical) projective object, $P$ or $P'$, and hence $G$ is (categorical) projective. It also follows that $\GProj(\cat{A})$ has enough (relative) projectives and enough (relative) injectives.
\end{proof}

In Theorems~\ref{thmUp} and \ref{thmUi} we construct certain model categories $\Up[]$ and $\Ui[]$ for which the associated homotopy categories $\Ho(\Up[])$ and $\Ho(\Ui[])$ are $\sGProj(\cat{A})$ and $\sGInj(\cat{A})$.

The standard references for the theory of cotorsion pairs are Enochs and Jenda~\cite{rha} and G{\"o}bel and Trlifaj \cite{GobelTrlifaj}. Below we recall a few notions that we need.

\begin{ipg}
\label{cotorsion-pairs}
A pair $(\class{X},\class{Y})$ of classes of objects in $\cat{A}$ is a \emph{cotorsion pair} if $\rightperp{\class{X}}=\class{Y}$ and $\class{X} = \leftperp{\class{Y}}$. Here, given a class $\class{C}$ of objects in $\cat{A}$, the right orthogonal  $\rightperp{\class{C}}$ is defined to be the class of all $Y \in \cat{A}$ such that $\Ext^1_{\cat{A}}(C,Y) = 0$ for all $C \in \class{C}$. The left orthogonal $\leftperp{\class{C}}$ is defined similarly. A cotorsion pair $(\class{X},\class{Y})$ is \emph{hereditary} if $\Ext^i_{\cat{A}}(X,Y) = 0$ for all $X \in \class{X}$, $Y \in \class{Y}$, and $i \geqslant 1$. A cotorsion pair $(\class{X},\class{Y})$ is \emph{complete} if it has \emph{enough projectives} and \emph{enough injectives}, i.e.~for each $A \in \cat{A}$ there exist short exact sequences $0 \xrightarrow{} Y \xrightarrow{} X \xrightarrow{} A \xrightarrow{} 0$ (enough projectives)
and $0 \xrightarrow{} A \xrightarrow{} Y' \xrightarrow{} X' \xrightarrow{} 0$ (enough injectives)  with $X,X' \in \class{X}$ and $Y,Y' \in \class{Y}$. 

In order for the above to make sense, the category $\cat{A}$ only needs to be exact (not necessarily abelian), so that one has a notion of ``short exact sequences'' (often called \emph{conflations}) and hence also of (Yoneda) $\Ext_\cat{A}$.
\end{ipg}

Cotorsion pairs are related to relative homological algebra, see \cite{rha}, and due to 
work of Hovey \cite{hovey} they are also related to abelian (or exact) model category structures.

\begin{ipg}
\label{Hovey-triple}
An \emph{abelian model structure} on $\cat{A}$, that is, a model structure on $\cat{A}$ which is compatible with the abelian structure in the sense of \cite[Def.~2.1]{hovey}, corresponds by Thm.~2.2 in \emph{loc.~cit.}~to a triple $(\class{C},\class{W},\class{F})$ of classes of objects in $\cat{A}$ for which $\class{W}$ is thick\footnote{\,Recall that a class $\cat{W}$ in an abelian (or, more generally, in an exact) category $\cat{A}$ is \emph{thick} if it is closed under retracts and satisfies that whenever two out of three terms in a short exact sequence are in $\class{W}$, then so is the third.} and $(\class{C} \cap \class{W},\class{F})$ and $(\class{C},\class{W} \cap \class{F})$ are complete cotorsion pairs in $\cat{A}$. Such a triple $(\class{C},\class{W},\class{F})$ is called a \emph{Hovey triple} in $\cat{A}$. In the model structure on $\cat{A}$ determined by such a Hovey triple, $\class{C}$ is precisely the class of cofibrant objects, $\class{F}$ is precisely the class of fibrant objects, and $\class{W}$ is precisely the class of trivial objects (that is, objects weakly equivalent to zero). A \emph{hereditary Hovey triple} is a Hovey triple $(\class{C},\class{W},\class{F})$ for which the associated complete cotorsion pairs $(\class{C} \cap \class{W},\class{F})$ and $(\class{C},\class{W} \cap \class{F})$ are both hereditary (as defined in \ref{cotorsion-pairs}).

Gillespie extends in \cite[Thm.~3.3]{Gil2011} Hovey's correspondance, mentioned above, from the realm of abelian categories to the realm of weakly idempotent complete exact categories. More precisely, if $\cat{A}$ is just an exact category (not necessarily abelian), then an \emph{exact model structure} on $\cat{A}$ is a model structure on $\cat{A}$ which is compatible with the exact structure in the sense of \cite[Def.~3.1]{Gil2011}. If, in addition, $\cat{A}$ is weakly idempotent complete (\cite[Def.~2.2]{Gil2011}), then exact model structures on $\cat{A}$ correspond precisely to Hovey triples $(\class{C},\class{W},\class{F})$ in $\cat{A}$.
\end{ipg}

Recall from \cite[Cor.~1.2.7 and Thm.~1.2.10(i)]{modcat} that if $\cat{C}$ is any model category, then the inclusion $\cat{C}_\mathrm{cf} \to \cat{C}$ induces an equivalence \mbox{$\cat{C}_\mathrm{cf}/\mspace{-4mu}\sim \ \to \Ho(\cat{C})$}. Here $\cat{C}_\mathrm{cf}$ is the full subcategory of $\cat{C}$ whose objects are both cofibrant and fibrant, ``$\sim$'' is the (abstract) homotopy relation from \cite[Def.~1.2.4]{modcat}, and $\Ho(\cat{C})$ is the homotopy category of the model category $\cat{C}$ (that is, the localization of $\cat{C}$ with respect to the collection of weak equivalences).

\begin{ipg}
  \label{triangulated}
  Let $\cat{A}$ be a weakly idempotent complete exact category equipped with an exact model structure coming from a \textsl{hereditary} Hovery triple $(\class{C},\class{W},\class{F})$ in $\cat{A}$. As explained in \ref{Hovey-triple}, one has $\cat{A}_\mathrm{cf}=\class{C} \cap \class{F}$, which by \cite[Prop.~5.2(4)]{Gil2011}\,/\,\cite[Thm.~6.21(1)]{Stoviceksurvey} is a Frobenius category with $\class{C} \cap \class{W} \cap \class{F}$ as the class of projective-injective objects. By \cite[Prop.~4.4(5)]{Gil2011}\,/\,\cite[Lem. 6.16(3)]{Stoviceksurvey} two parallel morphisms in $\cat{A}_\mathrm{cf}=\class{C} \cap \class{F}$ are homotopic, in the (abstract) model categorical sense, if and only their difference factors through an object in \mbox{$\class{C} \cap \class{W} \cap \class{F}$}. Thus, \mbox{$\cat{A}_\mathrm{cf}/\mspace{-4mu}\sim$} is nothing but the stable category \smash{$\underline{\cat{A}_\mathrm{cf}}$} of the Frobenius category $\cat{A}_\mathrm{cf}$ (see the remarks pre\-ce\-ding Proposition~\ref{GProj-is-Frobenius}), so the category \mbox{$\cat{A}_\mathrm{cf}/\mspace{-4mu}\sim$} carries a natural triangulated structure. As mentioned above, one has an equivalence of categories \mbox{$\Ho(\cat{A}) \simeq \cat{A}_\mathrm{cf}/\mspace{-4mu}\sim$},~and~via this equivalence the homotopy category $\Ho(\cat{A})$ inherits a triangulated structure from \mbox{$\cat{A}_\mathrm{cf}/\mspace{-4mu}\sim$}. More precisely, the distinguished triangles in $\Ho(\cat{A})$ are, up to isomorphism, the images in $\Ho(\cat{A})$ of  distinguished triangles in $\underline{\cat{A}_\mathrm{cf}}=\cat{A}_\mathrm{cf}/\mspace{-4mu}\sim$ under the equivalence $\cat{A}_\mathrm{cf}/\mspace{-4mu}\sim \ \to \Ho(\cat{A})$. It is evident that when $\Ho(\cat{A})$ is equipped with this triangulated structure, then the equivalence \mbox{$\Ho(\cat{A}) \simeq \cat{A}_\mathrm{cf}/\mspace{-4mu}\sim$} (of ordinary categories \emph{a priori}) becomes an equivalence of triangulated categories, that is, the functors \mbox{$\Ho(\cat{A}) \leftrightarrows \cat{A}_\mathrm{cf}/\mspace{-4mu}\sim$}  are triangulated. 
\end{ipg}

\section{Sharp--Foxby adjunctions}

Recall from the beginning of Section~\ref{Preliminaries} that $\cat{A}$ always denotes any bicomplete abelian category with enough projectives and enough injectives. In this section, we give conditions on $\class A$ which ensure that $\sGProj(\class A)$ and $\sGInj(\class A)$ are equivalent as triangulated categories.

\begin{definition}
  \label{Up}
  Let $\Up[]$ be the full subcategory of $\cat{A}$ whose objects are given by
  \begin{displaymath}
    \Up[] = \left\{ M\in \class A \ | \ \GpdA(M)<\infty\right\}\,.
  \end{displaymath}
  Let and $\Cp[]$, $\Wp[]$, and $\Fp[]$ be the following subclasses of $\Up[]$:
  \begin{displaymath}
    \Cp[] = \GProj(\class A)\,, \quad
    \Wp[] = \left\{ M\in \class A \ | \ \pdA(M)<\infty\right\}, 
    \quad \text{and} \quad 
    \Fp[] = \Up[]\,.
  \end{displaymath}
  The classes $\Up[]$, $\Cp[]$, $\Wp[]$, and $\Fp[]$ depend on $\cat{A}$, and if necessay we use the more detailed notation $\Up$, $\Cp$, $\Wp$, and $\Fp$ instead. (The superscript ``$\symbolp$'' is supposed to give the reader associations to the word ``projective''.)
\end{definition}

\begin{definition}
  \label{Ui}
  Let $\Ui[]$ be the full subcategory of $\cat{A}$ whose objects are given by
  \begin{displaymath}
    \Ui[] = \left\{ N\in \class A \ | \ \GidA(N)<\infty\right\}\,.
  \end{displaymath}
  Let and $\Ci[]$, $\Wi[]$, and $\Fi[]$ be the following subclasses of $\Ui[]$:
  \begin{displaymath}
    \Ci[] = \Ui[]\,, \quad 
    \Wi[] = \left\{ N\in \class A \ | \ \idA(N)<\infty\right\}, 
    \quad \text{and} \quad
    \Fi[] = \GInj(\class A)\,.
  \end{displaymath}
    The classes $\Ui[]$, $\Ci[]$, $\Wi[]$, and $\Fi[]$ depend on $\cat{A}$, and if necessay we use the more detailed notation $\Ui$, $\Ci$, $\Wi$, and $\Fi$ instead. (The superscript ``$\symboli$'' is supposed to give the reader associations to the word ``injective''.)
\end{definition}


\begin{lemma}
  \label{idempotent-complete-exact}
  The categories $\Up[]$ and $\Ui[]$ are additive and extension-closed subcategories of the abelian category $\cat{A}$; hence they are exact categories. Furthermore, $\Up[]$ and $\Ui[]$ are closed under direct summands in $\cat{A}$; hence they are idempotent complete.  
\end{lemma}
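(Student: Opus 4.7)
The plan is to derive all four assertions from standard behavior of $\GpdA$ and $\GidA$ under direct sums, extensions, and direct summands. The authors have already noted in the proof of Proposition~\ref{GProj-is-Frobenius} that the basic properties of Gorenstein projective modules recorded in \cite{HHl04a}, together with their injective duals, transfer without change to any bicomplete abelian category with enough projectives/injectives. I would treat $\Up[]$ in detail; the argument for $\Ui[]$ is completely dual, with $\GpdA$ replaced by $\GidA$.

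For additivity, I would use that $0 \in \GProj(\cat{A}) \subseteq \Up[]$ together with the identity $\GpdA(M \oplus N) = \max\{\GpdA(M), \GpdA(N)\}$, which is immediate from componentwise assembly of Gorenstein projective resolutions. For extension-closedness, I would invoke the horseshoe-type inequality $\GpdA(M) \leqslant \max\{\GpdA(M'), \GpdA(M'')\}$, valid for every short exact sequence $0 \to M' \to M \to M'' \to 0$; this is the abelian-category version of \cite[Thm.~2.24]{HHl04a}. As an additive, extension-closed full subcategory of the abelian category $\cat{A}$, the category $\Up[]$ then inherits a canonical exact structure whose conflations are the short exact sequences of $\cat{A}$ with all three terms in $\Up[]$.

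For summand-closedness, if $M = N \oplus N' \in \Up[]$, then $\GpdA(N) \leqslant \GpdA(M) < \infty$, again a standard inequality. To deduce idempotent completeness, note that any idempotent $e$ on an object $U$ of $\Up[]$ splits already in the ambient abelian category as $U \cong \im(e) \oplus \im(1-e)$, and both summands lie in $\Up[]$ by what was just proved, so the splitting takes place in $\Up[]$. No step here presents a real obstacle; the only point deserving some care is to verify that the Gorenstein-dimension inequalities cited from \cite{HHl04a} transfer verbatim to any bicomplete abelian category with enough projectives (and dually with enough injectives). This is routine and mirrors exactly the transfer already appealed to in the proof of Proposition~\ref{GProj-is-Frobenius}.
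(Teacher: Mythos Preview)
Your proposal is correct and follows essentially the same approach as the paper: both arguments reduce to the standard inequalities for $\GpdA$ (and dually $\GidA$) under direct sums, extensions, and summands recorded in \cite{HHl04a}---specifically \cite[Prop.~2.19 and Thm.~2.24]{HHl04a}---together with the observation that these proofs transfer verbatim to any bicomplete abelian category with enough projectives and injectives. Your write-up is slightly more explicit about which inequality handles which assertion, but the content is the same.
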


\begin{proof}
  In the case where $\cat{A}=\mathrm{Mod}(A)$ for a ring $A$, the assertions follow from 
  \cite[Prop. 2.19 and Thm.~2.24]{HHl04a} (and the dual statements about Gorenstein injective modules). By~in\-spec\-tion, one verifies that the same proofs work in any bicomplete abelian category $\cat{A}$ with enough projectives and enough injectives.
\end{proof}

We show in Theorems~\ref{thmUp} and \ref{thmUi} that $(\Cp[],\Wp[],\Fp[])$ and $(\Ci[],\Wi[],\Fi[])$ are Hovey triples (see \ref{Hovey-triple}) in the idempotent complete exact categories $\Up[]$ and $\Ui[]$.

\begin{definition}
  \label{Sharp-Foxby}
A \emph{Sharp--Foxby adjunction} on $\cat{A}$ is an adjunction $(S,T)$ of endofunctors on $\class A$ for which the following properties hold:
\begin{itemize}
\item[(SF1)] $S$ maps $\Up[]$ to $\Ui[]$ and it maps $\Wp[]$ to $\Wi[]$. 

\item[(SF2)] The restriction of $S$ to $\Up[]$ is exact: if $0 \to X' \to X \to X'' \to 0$ is an exact sequence in $\cat{A}$ with $X',X,X'' \in \Up[]$, then the sequence $0 \to SX' \to SX \to SX'' \to 0$ is exact.

\item[(SF3)] $T$ maps $\Ui[]$ to $\Up[]$ and it maps $\Wi[]$ to $\Wp[]$. 

\item[(SF4)] The restriction of $T$ to $\Ui[]$ is exact: if $0 \to Y' \to Y \to Y'' \to 0$ is an exact sequence in $\cat{A}$ with $Y',Y,Y'' \in \Ui[]$, then the sequence $0 \to TY' \to TY \to TY'' \to 0$ is exact.

\item[(SF5)] The unit of adjunction $\eta_X \colon X \to TSX$ is an isomorphism for every $X \in \Up[]$.

\item[(SF6)] The counit of adjunction $\varepsilon_Y \colon STY \to Y$ is an isomorphism for every $Y \in \Ui[]$.
\end{itemize}
\end{definition}

\begin{remark}
\label{Sharp-Foxby-remarks}
By (SF1), (SF3), (SF5), and (SF6) a Sharp--Foxby adjunction \mbox{$S \!: \cat{A} \rightleftarrows \cat{A} : T$} restricts to adjoint equivalences of categories $\Up[] \rightleftarrows \Ui[]$ and $\Wp[] \rightleftarrows \Wi[]$. By Lemma~\ref{idempotent-complete-exact} the categories $\Up[]$ and $\Ui[]$ have natural exact structures. Conditions (SF2) and (SF4) imply that the induced adjoint equivalence $\Up[] \rightleftarrows \Ui[]$ preserves the exact structure, i.e.~the functors are exact; thus it is an adjoint equivalence of exact categories.\footnote{\,If $\cat{E}$ and $\cat{E}'$ are exact categories and \mbox{$F \!: \cat{E} \rightleftarrows \cat{E}' : G$} is an adjoint equivalence of the underlying (ordinary) categories, then it does not automatically follow that the functors $F$ and $G$ are exact. Indeed, if $\cat{E}$ and $\cat{E}'$ have the same underlying category and the exact structure on 
$\cat{E}$ is coarser than that on $\cat{E}'$ (that is, every sequence which is exact in $\cat{E}$ is also exact in $\cat{E}'$ --- for example, $\cat{E}$ could have the trivial exact structure, in which the only ``exact'' sequences are the split exact ones, whereas $\cat{E}'$ could have any exact structure), then the identity functors \mbox{$\cat{E} \rightleftarrows \cat{E}'$} constitute an adjoint equivalence of the underlying categories where only $\cat{E} \to \cat{E}'$ is exact (but $\cat{E} \leftarrow \cat{E}'$ is not).}
\end{remark}

\enlargethispage{1.6ex}

The following example explains the terminology in Definition~\ref{Sharp-Foxby}.

\begin{example}   
\label{Auslander-Bass}
  Let $A$ be a commutative noetherian local Cohen--Macaulay ring with a dualizing module $D$. Foxby considered in \cite[\S1]{HBF72} two classes $\mathbf{A}(A)$ and $\mathbf{B}(A)$ of $A$-modules\footnote{\,In the literature, the classes $\mathbf{A}(A)$ and $\mathbf{B}(A)$ are referred to  as \emph{Foxby classes}. Sometimes, $\mathbf{A}(A)$ is called the \emph{Auslander class} and $\mathbf{B}(A)$ is called the \emph{Bass class}. Foxby himself \cite{HBF72} used the symbols $\Phi_D$ and $\Psi_D$ for these classes, but in the paper \cite{EJX-96b} by Enochs, Jenda, and Xu they are denoted by $\mathcal{G}_0$ and $\mathcal{J}_0$. We have adopted the symbols $\mathbf{A}(A)$~and~$\mathbf{B}(A)$ from the joint work of Avramov and Foxby; see for example \cite[\S3]{LLAHBF97}.}:
  
  A module $M$ is in $\mathbf{A}(A)$ if and only if $\Tor_i^A(D,M)=0$ and $\Ext_A^i(D,D\otimes_AM)=0$ for all $i>0$ and the natural homomorphism $\eta_M \colon M \to \Hom_A(D,D\otimes_AM)$ is an isomorphism.
  
  A module $N$ is in $\mathbf{B}(A)$ if and only if $\Ext_A^i(D,N)=0$ and $\Tor_i^A(D,\Hom_A(D,N))=0$ for all $i>0$ and the natural homomorphism $\varepsilon_N \colon D \otimes_A \Hom_A(D,N) \to N$ is an isomorphism.  
  
Foxby \cite{HBF72} proved that the adjunction \mbox{$(D\otimes_A\!-\,,\Hom_A(D,-))$} on $\mathrm{Mod}(A)$ restricts~to~an adjoint equivalence $\mathbf{A}(A)\rightleftarrows\mathbf{B}(A)$ and further to an adjoint equivalence \smash{\mbox{$\Wp[\mathrm{Mod}(A)] \mspace{-3mu}\rightleftarrows\mspace{-2mu} \Wi[\mathrm{Mod}(A)]$}} (see Definitions~\ref{Up} and \ref{Ui}). The latter is an extension of a result \cite[Thm.~(2.9)]{RYS72}~by~Sharp, which asserts that  \mbox{$D\otimes_A-$} and $\Hom_A(D,-)$ restrict to an adjoint equivalence between the categories
of \textsl{finitely generated} $A$-modules with finite projective dimension and \textsl{finitely ge\-ne\-ra\-ted} $A$-modules with finite injective dimension. Note that it is evident from the definitions
that the restriction of \mbox{$D\otimes_A-$} to $\mathbf{A}(A)$ and of $\Hom_A(D,-)$ to $\mathbf{B}(A)$ are exact functors.

By Enochs, Jenda, and Xu \cite[Cor.~2.4 and 2.6]{EJX-96b} an $A$-module belongs to $\mathbf{A}(A)$, respectively, $\mathbf{B}(A)$, if and only if it has finite Gorenstein projective dimension, respectively, finite Gorenstein injective dimension. Thus, in the notation from \ref{Up} and \ref{Ui} we have:
\begin{displaymath}
  \mathbf{A}(A)=\,\Up[\mathrm{Mod}(A)]  \qquad \text{and} \qquad
  \mathbf{B}(A)=\,\Ui[\mathrm{Mod}(A)].
\end{displaymath}
Consequently, \mbox{$(S,T)=(D\otimes_A\!-\,,\Hom_A(D,-))$} is a Sharp--Foxby adjunction on $\mathrm{Mod}(A)$. In view of \cite[Thms. 4.1 and 4.4]{CFH-06} this remains to be true if $A$ is any two-sided noetherian ring with a dualizing module $D$, that is, a dualizing complex concentrated in degree zero.
\end{example}

\begin{theorem}
  \label{thmUp}
  Consider the idempotent complete exact category $\Up[]$ from Lemma~\ref{idempotent-complete-exact}. The triple $(\Cp[],\Wp[],\Fp[])$ from Definition~\ref{Up} is a hereditary Hovey triple in $\Up[]$ (see \ref{Hovey-triple}). In~par\-ticular, $\Up[]$ has an exact model structure for which:
\begin{itemize}
\item[--] The cofibrant objects in $\Up[]$ are the Gorenstein projective objects in $\cat{A}$.
\item[--] The trivial objects in $\Up[]$ are the objects in $\class A$ with finite projective dimension.
\item[--] All objects in $\Up[]$ are fibrant.
\end{itemize}  
The homotopy category of this model category is equivalent, as a triangulated category, to the stable category of Gorenstein projective objects in $\cat{A}$; in symbols:
\begin{displaymath}
  \mathrm{Ho}(\Up[]) \,\simeq\, \sGProj(\class A)\,.
\end{displaymath}
\end{theorem}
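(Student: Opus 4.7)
My plan is to apply Gillespie's version of Hovey's correspondence, recalled in \ref{Hovey-triple}, to the weakly idempotent complete exact category $\Up[]$ (exact and idempotent complete by Lemma~\ref{idempotent-complete-exact}). Concretely, I would verify that $(\Cp[],\Wp[],\Fp[])$ is a hereditary Hovey triple in $\Up[]$; the existence of the model structure with the listed cofibrant, fibrant, and trivial objects is then automatic, and the identification of the homotopy category follows by the general mechanism of \ref{triangulated}. The first reduction is to compute the two intersections that govern the Hovey correspondence: since a Gorenstein projective object of finite projective dimension is projective (a standard consequence of Definition~\ref{Gorenstein-objects}), one has $\Cp[] \cap \Wp[] = \mathrm{Proj}(\cat{A})$; and since $\pdA(M) < \infty$ forces $\GpdA(M) < \infty$, one has $\Wp[] \cap \Fp[] = \Wp[]$. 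Thickness of $\Wp[]$ in $\Up[]$ reduces to the classical closure of finite projective dimension under retracts and two-out-of-three.

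The cotorsion pair $(\mathrm{Proj}(\cat{A}),\Up[])$ in $\Up[]$ is the easy one. Ext-orthogonality and heredity are automatic from $\Ext^{i \geq 1}_{\cat{A}}(P,-) = 0$ for projective $P$; conversely, an object $C \in \Up[]$ left orthogonal to $\Up[]$ is a summand of a projective (choose an epimorphism $P \twoheadrightarrow C$ from a projective $P$ whose kernel is again in $\Up[]$, which then splits), hence projective by Lemma~\ref{idempotent-complete-exact}. Completeness is immediate: the ``enough projectives'' half is a one-step projective resolution (the kernel stays in $\Up[]$ since $\GpdA$ drops by at most one), and the ``enough injectives'' half is realized by the trivial sequence $0 \to M \to M \to 0 \to 0$.

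The cotorsion pair $(\GProj(\cat{A}),\Wp[])$ in $\Up[]$ is the technical heart of the argument and the main obstacle. Ext-orthogonality and heredity follow from the vanishing $\Ext^{i \geq 1}_{\cat{A}}(G,W) = 0$ for $G \in \GProj(\cat{A})$ and $W$ of finite projective dimension, proved by induction on $\pdA(W)$ using the defining property of Gorenstein projectivity. Completeness requires, for each $M \in \Up[]$, short exact sequences
\[
0 \to W \to G \to M \to 0 \qquad \text{and} \qquad 0 \to M \to W' \to G' \to 0
\]
with $G,G' \in \GProj(\cat{A})$ and $W,W' \in \Wp[]$. These are the standard Gorenstein projective approximation and hull for objects of finite Gorenstein projective dimension, produced by an Auslander--Buchweitz-type argument; in the module case they are given by \cite{HHl04a}, and the same proof goes through in any bicomplete abelian category with enough projectives (as noted in the proof of Proposition~\ref{GProj-is-Frobenius}). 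All four terms lie in $\Up[]$, so the sequences are exact in the exact subcategory $\Up[]$.

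Having assembled the hereditary Hovey triple, \ref{Hovey-triple} delivers the exact model structure with the stated description of cofibrant, trivial, and fibrant objects. For the homotopy category, the cofibrant-fibrant objects are $\Cp[] \cap \Fp[] = \GProj(\cat{A})$, and in the induced Frobenius structure on $\GProj(\cat{A})$ the projective-injective class is $\Cp[] \cap \Wp[] \cap \Fp[] = \mathrm{Proj}(\cat{A})$. Applying \ref{triangulated} then yields the triangulated equivalence $\Ho(\Up[]) \simeq \GProj(\cat{A})/\mspace{-4mu}\sim\, = \sGProj(\cat{A})$, as required.
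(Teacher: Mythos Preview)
Your proposal is correct and follows essentially the same route as the paper: compute the intersections $\Cp[]\cap\Wp[]=\mathrm{Proj}(\cat{A})$ and $\Wp[]\cap\Fp[]=\Wp[]$, verify the easy cotorsion pair $(\mathrm{Proj}(\cat{A}),\Up[])$ directly, and obtain the cotorsion pair $(\GProj(\cat{A}),\Wp[])$ from the Gorenstein projective approximation and hull sequences (the paper cites \cite[Thm.~2.10]{HHl04a} and \cite[Lem.~2.17]{CFH-06} for these), then invoke \ref{Hovey-triple} and \ref{triangulated}. The only point you leave implicit is that the approximation sequences, combined with the Ext-vanishing and closure of $\GProj(\cat{A})$ and $\Wp[]$ under summands, also yield the reverse inclusions ${}^\perp\Wp[]\subseteq\GProj(\cat{A})$ and $(\GProj(\cat{A}))^\perp\subseteq\Wp[]$ via the standard splitting argument; the paper spells this out explicitly.
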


\begin{remark}
A number of fundamental properties of Gorenstein projective modules, i.e. Gorenstein projective objects in the category $\cat{A}=\mathrm{Mod}(A)$ where $A$ is a ring, are recorded in e.g.~\cite{CFH-06,HHl04a}. The results we need about Gorenstein projective objects in a general abelian category (still bicomplete with enough projectives and enough injectives) can be proved as it is done for modules. We leave it to the reader to inspect the relevant proofs.
\end{remark}

\begin{proof}[Proof of Theorem~\ref{thmUp}]
It is well-known that $\Wp[]$ is a thick subcategory of $\cat{A}$ (and hence also of $\Up[]$). By \cite[Prop.~2.27]{HHl04a} the intersection $\Cp[] \cap \Wp[]$ equals the class $\operatorname{Proj}\cat{A}$ of projective objects in $\cat{A}$. Thus the pair $(\Cp[] \cap \Wp[], \Fp[])$ is equal to $(\operatorname{Proj}\cat{A},\,\Up[])$, which we now argue is a complete hereditary cotorsion pair in $\Up[]$. As $\Ext_\cat{A}^{\geqslant 1}(P,A)=0$ for all $P\in \operatorname{Proj}\cat{A}$ and all $A\in \Up[]$ (even all $A \in \cat{A}$), we get that  $(\operatorname{Proj}\cat{A})^\perp = \Up[]$ (as the ``$\perp$'' is only calculated inside of $\Up[]$) and that $\operatorname{Proj}\cat{A} \subseteq {}^\perp\Up[]$. To show that
$\operatorname{Proj}\cat{A} \supseteq {}^\perp\Up[]$ let $M\in {}^\perp\Up[] \ (\subseteq \Up[])$. By assumption, $\cat{A}$ has enough projectives, and hence there exists a short exact sequence in $\cat{A}$, 
\begin{equation}\label{eno-proj}
0\longrightarrow A\longrightarrow P\longrightarrow M\longrightarrow 0\,,
\end{equation}
where $P$ is projective. As $M$ belongs to $\Up[]$, so does $A$ by \cite[Thm.~2.24]{HHl04a}. By assumption, $\Ext_\cat{A}^1(M,A)=0$, so (\ref{eno-proj}) splits and hence $M\in \operatorname{Proj}\cat{A}$. This shows that $(\operatorname{Proj}\cat{A},\,\Up[])$ is a hereditary cotorsion pair. For completeness of this cotorsion pair, the sequence (\ref{eno-proj}) shows that the pair has enough projectives. The trivial exact sequence $0\to M\to M\to 0\to 0$ (for any $M$ in $\Up[]$) shows that the pair has enough injectives.

Next we show that $(\Cp[],\Wp[]\cap \Fp[])=(\GProj\class A,\Wp[])$ is a complete hereditary cotorsion pair in $\Up[]$. By \cite[Thm.~2.20]{HHl04a} we have $\Ext_\cat{A}^{\geqslant 1}(G,A)=0$ for all $G\in \GProj\class A$ and $A\in \Wp[]$, and hence we get $\GProj\class A \subseteq {}^\perp\Wp[]$ and $(\GProj\class A)^\perp \supseteq \Wp[]$. To show that $\GProj\class A \supseteq {}^\perp\Wp[]$, let $M\in {}^\perp\Wp[] \ (\subseteq \Up[])$. By \cite[Thm~2.10]{HHl04a} there exists a short exact sequence
\begin{equation}\label{eno-proj2}
0\longrightarrow A\longrightarrow G\longrightarrow M\longrightarrow 0
\end{equation}
with $G\in \GProj\class A$ and $A \in \Wp[]$. By assumption, $\Ext_\cat{A}^1(M,A)=0$, so (\ref{eno-proj2}) splits and hence $M$ is a direct summand in $G$. By \cite[Thm~2.5]{HHl04a} (see also Prop.~1.4 in \emph{loc.~cit.}) the class $\GProj\cat{A}$ is closed under direct summands (here we use our assumption that $\cat{A}$ is cocomplete, or at least that $\cat{A}$ has countable coproducts), and it follows that $M$ itself belongs to $\GProj\cat{A}$.
To show $(\GProj\class A)^\perp \subseteq \Wp[]$, assume that $M\in (\GProj\class A)^\perp \ (\subseteq \Up[])$. By \cite[Lem.~2.17]{CFH-06} there is a short exact sequence
\begin{equation}\label{eno-inj2}
 0\longrightarrow M\longrightarrow A'\longrightarrow G'\longrightarrow 0
\end{equation} 
where $G'\in \GProj\class A$ and $\pd{A'}=\GpdA(M)<\infty$, that is, $A'$ is in $\Wp[]$. By assumption, $\Ext_\cat{A}^1(G',M)=0$, so (\ref{eno-inj2}) splits and hence $M$ also belongs to $\Wp[]$ (which is thick). Thus $(\GProj\class A,\Wp[])$ is a hereditary cotorsion pair in $\Up[]$, and the existence of the sequences (\ref{eno-proj2}) and (\ref{eno-inj2}) shows that this cotorsion pair is complete.

These arguments prove that $(\Cp[],\Wp[],\Fp[])$ is a hereditary Hovey triple in $\Up[]$. In view of the equalities $\Cp[] \cap \Fp[] = \GProj\class A$ and $\Cp[] \cap \Wp[] \cap \Fp[] = \operatorname{Proj}\cat{A}$, where the latter is by \cite[Prop~2.27]{HHl04a}, the rest of the theorem now follows from \ref{Hovey-triple} and \ref{triangulated} (and Proposition~\ref{GProj-is-Frobenius}).
\end{proof}

\begin{theorem}
  \label{thmUi}  
    Consider the idempotent complete exact category $\Ui[]$ from Lemma~\ref{idempotent-complete-exact}.~The triple $(\Ci[],\Wi[],\Fi[])$ from Definition~\ref{Ui} is a hereditary Hovey triple in $\Ui[]$ (see \ref{Hovey-triple}). In~par\-ticular, $\Ui[]$ has an exact model structure for which:
\begin{itemize}
\item[--] All objects in $\Ui[]$ are cofibrant.
\item[--] The trivial objects in $\Ui[]$ are the objects in $\class A$ with finite injective dimension.
\item[--] The fibrant objects in $\Ui[]$ are the Gorenstein injective objects in $\cat{A}$.
\end{itemize}  
The homotopy category of this model category is equivalent, as a triangulated category, to the stable category of Gorenstein injective objects in $\cat{A}$; in symbols:
\begin{displaymath}
  \mathrm{Ho}(\Ui[]) \,\simeq\, \sGInj(\class A)\,.
\end{displaymath}
\end{theorem}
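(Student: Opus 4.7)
The plan is to mirror the proof of Theorem~\ref{thmUp} point-for-point, dualising every statement: projectives become injectives, $\GProj\cat{A}$ becomes $\GInj\cat{A}$, $\Wp[]$ becomes $\Wi[]$, and every short exact sequence gets reversed. The remark preceding the theorem guarantees that the Gorenstein-injective analogues of the results from \cite{HHl04a} and \cite{CFH-06} invoked in the proof of Theorem~\ref{thmUp} hold in any $\cat{A}$ satisfying our standing hypotheses.

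First I would recall that $\Wi[]$ is thick in $\cat{A}$ (hence in $\Ui[]$) and identify $\Wi[] \cap \Fi[] = \operatorname{Inj}\cat{A}$ via the dual of \cite[Prop.~2.27]{HHl04a}. Then I would verify that the pair $(\Ci[],\,\Wi[] \cap \Fi[]) = (\Ui[],\,\operatorname{Inj}\cat{A})$ is a complete hereditary cotorsion pair in $\Ui[]$. Injectivity makes the inclusions $\Ui[] \subseteq \leftperp{\operatorname{Inj}\cat{A}}$ and $\operatorname{Inj}\cat{A} \subseteq \rightperp{\Ui[]}$ immediate; the reverse of the first is tautological (``$^\perp$'' is taken inside $\Ui[]$), and for the reverse of the second I would embed a given $N \in \rightperp{\Ui[]}$ into an injective $E$ via $0 \to N \to E \to B \to 0$, place $B$ in $\Ui[]$ by the dual of \cite[Thm.~2.24]{HHl04a}, and split the sequence. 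The same sequence furnishes enough injectives for the cotorsion pair, while the trivial sequence $0 \to 0 \to N \to N \to 0$ furnishes enough projectives.

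Next I would verify that $(\Ci[] \cap \Wi[],\,\Fi[]) = (\Wi[],\,\GInj\cat{A})$ is a complete hereditary cotorsion pair in $\Ui[]$. The $\Ext$-vanishing between the two classes is the dual of \cite[Thm.~2.20]{HHl04a}. For $\leftperp{\GInj\cat{A}} \subseteq \Wi[]$, I would apply the dual of \cite[Lem.~2.17]{CFH-06} to obtain $0 \to G' \to A' \to N \to 0$ with $G' \in \GInj\cat{A}$ and $A' \in \Wi[]$; the sequence splits, placing $N$ as a summand of $A' \in \Wi[]$ (summand-closed since thick). For $\rightperp{\Wi[]} \subseteq \GInj\cat{A}$, the dual of \cite[Thm.~2.10]{HHl04a} yields $0 \to N \to G \to A \to 0$ with $G \in \GInj\cat{A}$ and $A \in \Wi[]$; splitting exhibits $N$ as a summand of $G$, and $\GInj\cat{A}$ is summand-closed by the dual of \cite[Thm.~2.5]{HHl04a}. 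The same two sequences witness the required completeness.

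Finally I would invoke \ref{Hovey-triple} and \ref{triangulated} to conclude that $(\Ci[],\Wi[],\Fi[])$ is a hereditary Hovey triple in $\Ui[]$ producing the claimed exact model structure, and the identifications $\Ci[] \cap \Fi[] = \GInj\cat{A}$ and $\Ci[] \cap \Wi[] \cap \Fi[] = \operatorname{Inj}\cat{A}$, combined with Proposition~\ref{GProj-is-Frobenius}, yield the triangulated equivalence $\Ho(\Ui[]) \simeq \sGInj(\cat{A})$. The only real obstacle is keeping track of the correct dual forms of the Gorenstein-projective results cited in Theorem~\ref{thmUp}, in particular the two approximation sequences used for the cotorsion pair $(\Wi[],\GInj\cat{A})$; once these are in hand, every step parallels the proof of Theorem~\ref{thmUp} verbatim.
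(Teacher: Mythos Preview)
Your proposal is correct and follows precisely the same approach as the paper, which proves Theorem~\ref{thmUi} simply by stating that it is dual to the proof of Theorem~\ref{thmUp}. You have merely spelled out the dualisation in detail, and each dual statement you invoke (the thickness of $\Wi[]$, the identification $\Wi[]\cap\Fi[]=\operatorname{Inj}\cat{A}$, the two approximation sequences from the duals of \cite[Thm.~2.10]{HHl04a} and \cite[Lem.~2.17]{CFH-06}, and summand-closure of $\GInj\cat{A}$) is exactly the ingredient the paper's ``dual'' directive intends.
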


\begin{proof}
  Dual to the proof of Theorem~\ref{Up}.
\end{proof}

\enlargethispage{1.2ex}

Our next goal is to show that a Sharp--Foxby adjunction on $\class A$ induces a Quillen equivalence between the model categories $\Up[]$ and $\Ui[]$. To this end, the next result will be useful.

\begin{proposition}
\label{Dalezios}
Let $\mathcal M$ and $\mathcal M'$ be two weakly idempotent complete exact model categories with associated Hovey triples $(\class C,\class W,\class F)$ and $(\class C',\class W',\class F')$; see \ref{Hovey-triple}. Assume that~$(F,G)$ is a Quillen adjunction $\class M \rightleftarrows \class M'$ where the functors $F$ and $G$ are exact and satisfy $F(\class W)\subseteq \class W'$ and $G(\class W') \subseteq \class W$. Then $(F,G)$
is a Quillen equivalence if and only if the unit $\eta_{X} \colon X\rightarrow GFX$ is a weak equivalence for every $X\in\class C$ and the counit $\varepsilon_{Y} \colon FGY\rightarrow Y$ is a weak equivalence for every $Y\in\class F'$.
\end{proposition}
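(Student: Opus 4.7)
The plan is to reduce the stated unit/counit criterion to the standard characterization of Quillen equivalence via the \emph{derived} unit and counit. Recall from \cite[Cor.~1.3.16]{modcat} that $(F,G)$ is a Quillen equivalence if and only if the derived unit $X \to GR(FX)$ is a weak equivalence for every cofibrant $X$ and the derived counit $FQ(GY)\to Y$ is a weak equivalence for every fibrant $Y$, where $R$ and $Q$ denote fibrant/cofibrant replacement in $\mathcal M'$ and $\mathcal M$ respectively. So the task is to bridge between these derived maps and the ordinary $\eta_X$ and $\varepsilon_Y$, and the bridge is an auxiliary lemma saying that both $F$ and $G$ preserve \emph{all} weak equivalences (not merely those between cofibrant, resp.\ fibrant, objects as Ken Brown's lemma would give).

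To establish the auxiliary lemma for $F$, I would argue as follows. Any weak equivalence in the exact model structure on $\mathcal M$ factors as a trivial cofibration followed by a trivial fibration, that is, as an admissible monomorphism with cokernel in $\class C\cap\class W$ followed by an admissible epimorphism with kernel in $\class W\cap\class F$. Since $F$ is exact it preserves admissible monomorphisms and epimorphisms, and since $F(\class W)\subseteq\class W'$ it preserves the thick class of trivial objects. Using the fact (contained in \cite[Prop.~4.4 and Prop.~5.2]{Gil2011} and \cite[Lem.~6.16]{Stoviceksurvey}) that in an exact Hovey model structure an admissible monomorphism with cokernel in $\class W'$ and an admissible epimorphism with kernel in $\class W'$ are already weak equivalences, we conclude that $F$ sends each piece of the factorization, hence the whole composite, to a weak equivalence. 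The dual argument applies to $G$.

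With the lemma in hand, the rest is bookkeeping via the two-out-of-three axiom. For $X\in\class C$, choose a fibrant replacement $r\colon FX\to RFX$, which is a weak equivalence in $\mathcal M'$; the derived unit factors as
\begin{equation*}
X \xrightarrow{\;\eta_X\;} GFX \xrightarrow{\;G(r)\;} GR(FX),
\end{equation*}
and the auxiliary lemma ensures $G(r)$ is a weak equivalence. Hence $\eta_X$ is a weak equivalence if and only if the derived unit is. Dually, for $Y\in\class F'$ and a cofibrant replacement $q\colon QGY\to GY$, the ordinary counit $\varepsilon_Y$ is a weak equivalence if and only if the derived counit $FQ(GY)\to Y$ is. Combining these two equivalences with Hovey's criterion recalled above yields both directions of the proposition at once.

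The main obstacle is really the auxiliary lemma. The delicate point is that trivial cofibrations are defined as admissible monomorphisms with cokernel in $\class C\cap\class W$, not merely in $\class W$; although $F$ preserves membership in $\class W$ it has no reason to preserve membership in $\class C$, so one cannot simply invoke ``$F$ is left Quillen'' and must instead pass through the sharper characterization of weak equivalences afforded by the Hovey triple together with the exactness of $F$. Once that sharper characterization is accepted, however, everything else collapses into one application of two-out-of-three per direction.
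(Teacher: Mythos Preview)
Your proof is correct and follows the same overall strategy as the paper: both reduce to Hovey's derived unit/counit criterion and then argue that the ``correction'' maps $G(r_{FX})$ and $F(q_{GY})$ are weak equivalences, so that two-out-of-three finishes the job.

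The organizational difference is worth noting. The paper works directly with the specific map $G(r_{FX})$: it applies $G$ to the conflation $FX \rightarrowtail RFX \twoheadrightarrow C$ (with $C \in \class C' \cap \class W'$), obtaining a conflation with third term $GC \in \class W$, and then carries out an explicit pullback construction to exhibit $G(r_{FX})$ as a composite of a trivial cofibration and a trivial fibration. You instead isolate the stronger auxiliary lemma that $F$ and $G$ preserve \emph{all} weak equivalences, appealing to the characterization of weak equivalences in an exact Hovey structure as maps admitting a factorization through an admissible mono with cokernel in $\class W$ followed by an admissible epi with kernel in $\class W$. Your route is cleaner and yields a reusable intermediate result; the paper's route is more self-contained since it spells out the pullback argument rather than citing the characterization. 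Under the hood, though, the pullback diagram in the paper is precisely a proof of the special case of the characterization you invoke, so the mathematical content is the same.

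One small remark: the precise citations you give (\cite[Prop.~4.4, Prop.~5.2]{Gil2011}, \cite[Lem.~6.16]{Stoviceksurvey}) point to results about the homotopy relation and the Frobenius structure on bifibrant objects, not directly to the weak-equivalence characterization you need. The statement you want is that in an exact model structure arising from a Hovey triple, an admissible mono with cokernel in $\class W$ (respectively an admissible epi with kernel in $\class W$) is automatically a weak equivalence; this follows from completeness of the cotorsion pairs via the same pullback trick the paper uses, and is implicit in the construction of the model structure, but you may want to adjust the pinpoint reference.
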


\begin{proof}
  Write $Q$ for the cofibrant replacement functor in $\mathcal M$ and $q_X \colon QX \to X$ for the natural trivial fibration ($X \in \mathcal M$). Similarly, write $R$ for the fibrant replacement functor in $\mathcal M'$ and $r_Y \colon Y \to RY$ for the natural trivial cofibration ($Y \in \mathcal M'$). By \cite[Prop.~1.3.13]{modcat} we have that $(F,G)$ is a Quillen equivalence if and only if the composite
\begin{displaymath}
  \xymatrix{
    X \ar[r]^-{\eta_{X}} & GFX \ar[r]^-{Gr_{FX}} & GRFX
  }
\end{displaymath}  
is a weak equivalence for all $X\in\class C$ and the composite
\begin{displaymath}
  \xymatrix{
    FQGY \ar[r]^-{Fq_{GY}} & FGY \ar[r]^-{\varepsilon_Y} & Y  
  }
\end{displaymath}
is a weak equivalence for all $Y\in\class F'$. We claim that the morphisms $Gr_{FX}$ and $Fq_{GY}$ are always weak equivalences for every $X\in\class M$ and $Y\in\class M'$ (which proves the assertion by the 2-out-of-3 property for weak equivalences). We only show that $Gr_{FX}$ is a weak equivalence. The fact that $r_{FX} \colon FX \to RFX$ is a trivial cofibration means, by definition~\cite[~Def.~3.1]{Gil2011} of an exact model structure, that $r_{FX}$ is an
admissible monomorphism with a trivially cofibrant cokernel, that is, one has a conflation (a short exact sequence)
\begin{displaymath}
  \xymatrix{
    FX \ \ar@{>->}[r]_-{\simeq}^-{r_{FX}} & RFX \ar@{->>}[r]^-{\pi} & C
  }
\end{displaymath}
in $\mathcal M'$ where $C$ is trivially cofibrant, that is, $C \in \class C' \cap \class W'$ (and 
$RFX$ is of course fibrant). By applying the exact functor $G$ to the sequence above, we get a conflation in $\mathcal M$, which is the bottom row of the following pullback diagram:
\begin{displaymath}
  \xymatrix{
    GFX \ \ar@{>->}[r]^-{\iota}_-{\simeq} \ar@{=}[d] & 
    T \ar@{->>}[r]^-{\varrho} \ar@{->>}[d]^-{\varphi}_-{\simeq} & 
    QGC \ar@{->>}[d]_-{\simeq}^-{q_{GC}} \\
    GFX \ \ar@{>->}[r]^-{Gr_{FX}} & GRFX \ar@{->>}[r]^-{G\pi} & GC\,.  
  }
\end{displaymath}
Note that this pullback diagram really exists; indeed, by definition of an exact category, any pullback of an admissible epimorphism exists and admissible epimorphisms are stable under pullbacks. In particular, $\varrho$ is an admissible epimorphism (and $\varrho$ has the same kernel as $G\pi$;  cf.~Freyd~\cite[Thm.~2.52]{abcat}). Since $C \in \class W'$ we have $GC\in\class W$ by assumption. Since one always has $Q(\class W) \subseteq \class W$, it follows that $QGC \in \class W$, and hence $QGC \in \class C \cap \class W$ (as $QY$ is always cofibrant). This means that $\iota$ is a trivial cofibration. In any model category, the class of trivial fibrations is stable under pullbacks by \cite[Cor.~1.1.11]{hovey}; thus the fact that $q_{GC}$ is a trivial fibration forces $\varphi$ to be the same. As $\iota$ and $\varphi$ are, in particular, weak equivalences, so is their composite $Gr_{FX}=\varphi\circ \iota$, as desired.
\end{proof}

\enlargethispage{3.1ex}

\begin{theorem}
  \label{Ho}
 A Sharp--Foxby adjunction $(S,T)$ on $\class A$ induces a Quillen equivalence between the model categories $\Up[]$ and $\Ui[]$ constructed in Theorems~\ref{thmUp} and \ref{thmUi}. Thus the total (left/right) derived functors of $S$ and $T$ yield an adjoint equivalence of the corresponding homotopy categories,
\begin{equation}
  \label{eq:derived-functors}
  \xymatrix@C=3pc{
     \mathrm{Ho}(\Up[]) \ar@<0.7ex>[r]^-{\mathbf{L}S} & 
     \mathrm{Ho}(\Ui[]) \ar@<0.7ex>[l]^-{\mathbf{R}T}
  }\!.
\end{equation} 
In fact, this is an equivalence of triangulated categories.
\end{theorem}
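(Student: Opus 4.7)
The plan is to apply Proposition~\ref{Dalezios} to the adjunction $(S,T)$ restricted from $\cat{A}$ to $\Up[]\rightleftarrows\Ui[]$. By (SF1) and (SF3) the endofunctors $S$ and $T$ genuinely carry $\Up[]$ into $\Ui[]$ and $\Ui[]$ into $\Up[]$, so the adjunction on $\cat{A}$ restricts to an adjunction between these subcategories. Lemma~\ref{idempotent-complete-exact} provides the required (weak) idempotent completeness, and Theorems~\ref{thmUp} and \ref{thmUi} equip the two categories with hereditary Hovey model structures, so we are in the setting of Proposition~\ref{Dalezios}.

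Next I would verify that $(S,T)$ is a Quillen adjunction and that the remaining hypotheses of Proposition~\ref{Dalezios} hold. Exactness of $S|_{\Up[]}$ and $T|_{\Ui[]}$ is precisely (SF2) and (SF4), and the preservation of trivial objects $S(\Wp[])\subseteq\Wi[]$ and $T(\Wi[])\subseteq\Wp[]$ is the remaining content of (SF1) and (SF3). For the Quillen adjunction itself, a cofibration in $\Up[]$ is an admissible monomorphism with cokernel in $\Cp[]=\GProj(\cat{A})$; by exactness of $S$ together with (SF1), its image is an admissible monomorphism in $\Ui[]$ with cokernel in $\Ci[]=\Ui[]$, hence a cofibration. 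A trivial cofibration has cokernel in $\Cp[]\cap\Wp[]=\operatorname{Proj}\cat{A}\subseteq\Wp[]$, whose image under $S$ lies in $\Ci[]\cap\Wi[]=\Wi[]$ again by (SF1). Finally, (SF5) asserts that $\eta_X$ is an actual isomorphism for every $X\in\Up[]$, in particular a weak equivalence for every $X\in\Cp[]$; (SF6) gives the analogue for $\varepsilon_Y$ on $\Fi[]\subseteq\Ui[]$. Proposition~\ref{Dalezios} then yields the Quillen equivalence, and forming total derived functors together with Theorems~\ref{thmUp} and \ref{thmUi} delivers the adjoint equivalence~(\ref{eq:derived-functors}).

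The final step---upgrading to an equivalence of triangulated categories---is the one I expect to be the main obstacle. My plan is to invoke the description in \ref{triangulated}: under the equivalences $\Ho(\Up[])\simeq\underline{\GProj(\cat{A})}$ and $\Ho(\Ui[])\simeq\underline{\GInj(\cat{A})}$ the triangulated structures are the Happel structures of the respective Frobenius categories of cofibrant--fibrant objects. Because $S$ is exact on $\Up[]$, a defining sequence $0\to G\to P\to\Sigma G\to 0$ in $\GProj(\cat{A})$ (with $P$ projective in $\cat{A}$) is sent to a short exact sequence $0\to SG\to SP\to S(\Sigma G)\to 0$ in $\Ui[]$ with $SP\in\Wi[]$ by (SF1); after fibrant replacement this represents the suspension of the image of $G$ in $\sGInj(\cat{A})$, so $\mathbf{L}S$ commutes with the Happel suspension. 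The same exact-sequence argument shows that $\mathbf{L}S$ sends distinguished triangles to distinguished triangles, and the dual reasoning handles $\mathbf{R}T$, yielding the desired triangulated equivalence.
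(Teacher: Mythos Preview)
Your argument for the Quillen equivalence is correct and matches the paper's: you verify that $S$ is left Quillen by checking it preserves (trivial) cofibrations via exactness and (SF1), and then invoke Proposition~\ref{Dalezios} using (SF5)/(SF6). This is exactly what the paper does.

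The triangulated part is where your sketch is too optimistic. Your plan---apply $S$ to a defining conflation $0\to G\to P\to \Sigma G\to 0$ and note $SP\in\Wi[]$---is the right starting point, and it is what the paper does. However, the phrase ``after fibrant replacement this represents the suspension'' conceals the real work. The object $SP$ is not injective, so the sequence $0\to SG\to SP\to S\Sigma G\to 0$ is \emph{not} a suspension conflation in $\GInj(\cat{A})$; one must produce \emph{compatible} fibrant replacements $SG\rightarrowtail H$, $SP\rightarrowtail E$, $S\tilde{G}\rightarrowtail\tilde{H}$ fitting into a conflation $0\to H\to E\to\tilde{H}\to 0$ in $\GInj(\cat{A})$ with all three cokernels in $\Wi[]$. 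The paper obtains this $3\times 3$ diagram via \cite[Lem.~6.20]{Stoviceksurvey} and then argues that $E$ is actually injective (being Gorenstein injective and of finite injective dimension). For a full standard triangle one further needs that the fibrant replacement of the pushout $SG''$ can be taken to be the pushout of the replaced data; the paper establishes this by building an explicit cube whose faces relate the $S$-image of the original pushout to a pushout in $\GInj(\cat{A})$, with a Snake Lemma argument to control the cokernel of the induced map $h''\colon SG''\to H''$. Your ``same exact-sequence argument'' does not address this compatibility, which is the crux of the proof. Finally, the paper avoids dualizing for $\mathbf{R}T$ by citing \cite[Lem.~5.3.6]{Nee}: a right adjoint of a triangulated functor is automatically triangulated.
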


\begin{proof}
  As mentioned Remark~\ref{Sharp-Foxby-remarks}, a Sharp--Foxby adjunction $(S,T)$ on $\class A$ induces an exact adjoint equivalence between $\Up[]$ and $\Ui[]$ with $S(\Wp[]) \subseteq \Wi[]$ and $T(\Wi[]) \subseteq \Wp[]$. Hence the unit $\eta_{X} \colon X\rightarrow TSX$ is an isomorphism, and hence also a weak equivalence, for all $X \in \Up[]$ (in particular for $X\in\Cp[]$); and the counit $\varepsilon_{Y} \colon STY\rightarrow Y$ is an isomorphism, and hence also a weak equivalence, for all $Y \in \Ui[]$ (in particular for $Y\in\Fi[]$). Thus, if we can show that $(S,T)$ is a Quillen adjunction $\Up[] \rightleftarrows \Ui[]$, then Proposition~\ref{Dalezios} will imply that it is in fact a Quillen equivalence (as claimed). To show this, it must be argued that $S \colon \Up[] \to \Ui[]$ is a left Quillen functor (see \cite[Def.~1.3.1]{modcat}), that is, we must argue that $S$ maps (trivial) cofibrations in $\Up[]$ to (trivial) cofibrations in $\Ui[]$. Let $f$ be a (trivial) cofibration in $\Up[]$, that is, $f$ is an admissible monomorphism with a (trivially) cofibrant cokernel $C$ (see \cite[~Def.~3.1]{Gil2011}). Since $S$ is exact, it follows that $S\!f$ is an admissible monomorphism in $\Ui[]$ with cokernel $S\!C$. Hence, we only need to prove that $S$ maps (trivially) cofibrant objects in $\Up[]$ to (trivially) cofibrant objects in $\Ui[]$. However, this is clear as every object in $\Ui[]$ is cofibrant, see Theorem~\ref{thmUi}, and since we have $S(\Wp[]) \subseteq \Wi[]$.
  
  Having established that $(S,T)$ yields a Quillen equivalence $\Up[] \rightleftarrows \Ui[]$, the adjoint equivalence of homotopy categories displayed in (\ref{eq:derived-functors}) follows from \cite[Prop.~1.3.13]{modcat}.
  
  It remains to see that the functors $\mathbf{L}S$ and $\mathbf{R}T$ are triangulated. By \cite[Lem.~5.3.6]{Nee} it suffices to prove that $\mathbf{L}S$ is triangulated, because then its right adjoint $\mathbf{R}T$ will automatically be triangulated as well. Recall from \ref{triangulated} that the distinguished triangles in $\Ho(\Up[])$ are, up to isomorphism, the images in $\Ho(\Up[])$ of distinguished triangles in $\sGProj(\class A)$ under the equivalence $\sGProj(\class A) \to \Ho(\Up[])$ (see also Theorem \ref{thmUp}). 
  
  At this point we need to recall from \cite[Chap.~I\S2.5]{HapT} how the triangulated structure on the stable category $\sGProj(\class A)$ is defined. For every morphism $u \colon G \to G'$ in the Frobenius category $\GProj(\class A)$ choose a short exact sequence (a conflation) \smash{\mbox{$G \stackrel{\text{\raisebox{2.5pt}{$i$}}}{\smash{\rightarrowtail}} P \stackrel{\text{\raisebox{3.5pt}{$p$}}}{\smash{\twoheadrightarrow}} \tilde{G}$}} in $\GProj(\class A)$ where $P$ is a projective-injective object, that is, $P \in \operatorname{Proj}(\class A)$. The object $\tilde{G}$ is the suspension of $G$; in symbols, $\tilde{G}=\upSigma G$ (the assignment $G \mapsto \tilde{G}=\upSigma G$ is not functorial on $\GProj(\class A)$, but it is functorial on $\sGProj(\class A)$). Then consider the pushout diagram in $\GProj(\class A)$,  
\begin{equation}
  \label{eq:2x3}
  \begin{gathered}
  \xymatrix{
    G \ \ar[d]_-{u} \ar@{>->}[r]^{i} & P \ar[d]^-{t} \ar@{->>}[r]^{p} & \tilde{G} \ar@{=}[d] \phantom{\; .}
    \\
    G' \ar@{}[ur]|-{\mathrm{pushout}} \ar@{>->}[r]^-{v} & G'' \ar@{->>}[r]^-{w} & \tilde{G} \;.  
  }
  \end{gathered}  
\end{equation}
The diagram
\begin{equation}
  \label{eq:standard-triangle}
  \xymatrix{
    G \ar[r]^-{u} & G' \ar[r]^-{v} & G'' \ar[r]^-{w} & \tilde{G}
  }\!,
\end{equation}
considered as a diagram in $\sGProj(\class A)$, is called a \emph{standard triangle}. By definition, a distinguished triangle in $\sGProj(\class A)$ is a diagram in this category which is isomorphic to some standard triangle. The triangulated structure on $\sGInj(\class A)$ is defined similarly.

We must show that the functor $\mathbf{L}S$ maps every distinguished triangle $\upDelta$ in $\Ho(\Up[])$ to a distinguished triangle in $\Ho(\Ui[])$. By the considerations above, we may assume that $\upDelta$ is the image in $\Ho(\Ui[])$ of a standard triangle  (\ref{eq:standard-triangle}) in $\sGProj(\class A)$. By definition, see \cite[~Def.~1.3.6]{modcat}, the action of the functor $\mathbf{L}S$ on an object $X$ in $\Ho(\Up[])$ is $\mathbf{L}S\!(X) = S\!QX$ where $QX$ is a cofibrant replacement of $X$. As the objects in (\ref{eq:standard-triangle}) are already cofibrant~in~$\Up[]$, see Theorem~\ref{thmUp}, the diagram $\mathbf{L}S\!(\upDelta)$ is nothing but
\begin{equation}
  \label{eq:S-standard-triangle}
  \xymatrix{
    S\!G \ar[r]^-{S\!u} & S\!G' \ar[r]^-{S\!v} & S\!G'' \ar[r]^-{S\!w} & S\!\tilde{G}
  }\!,
\end{equation}
which we must show is a distinguished triangle in $\Ho(\Ui[])$. Since the pair $(\Ci[] \cap \Wi[], \Fi[]) = (\Wi[],\GInj \class A)$ is a hereditary cotorsion pair in $\Ui[]$, see Theorem~\ref{thmUi} and Definition~\ref{Ui}, it follows from \cite[Lem.~6.20]{Stoviceksurvey} that we can find a diagram in $\Ui[]$,
\begin{equation}
  \label{eq:3x3}
  \begin{gathered}
  \xymatrix@R=1.5pc{
    S\!G \, \ar@{>->}[d]^-{h} \ar@{>->}[r]^-{S\!i} & 
    S\!P \ar@{>->}[d]^-{e} \ar@{->>}[r]^-{S\!p} & 
    S\!\tilde{G} \ar@{>->}[d]^-{\tilde{h}}
    \\
    H \, \ar@{->>}[d] \ar@{>->}[r]^-{i_0} & 
    E \ar@{->>}[d] \ar@{->>}[r]^-{p_0} & 
    \tilde{H} \ar@{->>}[d]
    \\
    J \, \ar@{>->}[r] & I \ar@{->>}[r] & \tilde{J}     
  }
  \end{gathered}  
\end{equation}
whose rows and columns are conflations, where $H,E,\tilde{H}$ are Gorenstein injective, and where $J,I,\tilde{J}$ have finite injective dimension. As $P \in \operatorname{Proj}\cat{A} \subseteq \Wp[]$ we have $S\!P \in \Wi[]$, that is, $S\!P$ has finite injective dimension. It follows from the middle column in (\ref{eq:3x3}) that $E$ has finite injective dimension, and since $E$ is also Gorenstein injective it must be injective (this is immediate from the  definition, \ref{Gorenstein-objects}, of Gorenstein injective objects). Let \smash{\mbox{$S\!G' \stackrel{\text{\raisebox{2.5pt}{$h'$}}}{\smash{\rightarrowtail}} H' \twoheadrightarrow J'$}} be a short exact sequence with $H' \in \GInj \class A$ and  $J' \in \Wi[]$. The morphism $h \colon S\!G \to H$ is a (special) Gorenstein injective preenvelope of $S\!G$ since it is monic and its cokernel $J \in \Wi[]$ satisfies $\Ext_{\cat{A}}^1(J,X)=0$ for all $X \in  \GInj(\class A)$; see \cite[Prop.~2.1.4]{Xu}. Thus, the morphism $h'S\!u \colon S\!G \to H' \in \GInj(\class A)$ lifts to a morphism $u_0 \colon H \to H'$ such that $u_0h = h'S\!u$. This gives commutativity of the left wall in the following diagram:
    \begin{equation}
    \label{eq:big}
    \begin{gathered}
      \xymatrix@!=0.5pc{ {} & H \, \ar'[d][dd]_-{u_0}
          \ar@{>->}[rr]^-{i_0} & & E \ar'[d][dd]_-{t_0} 
          \ar@{->>}[rr]^-{p_0} 
          & & \tilde{H} \ar@{=}[dd]
          \\
          S\!G \,
          \ar@{>->}[ur]^(0.45){h}
          \ar[dd]_-{S\!u}
          \ar@{>->}[rr]^(0.70){S\!i} & & 
          S\!P
          \ar@{>->}[ur]^(0.45){e}
          \ar[dd]_(0.70){S\!t}
          \ar@{->>}[rr]^(0.70){S\!p} & & S\!\tilde{G}
          \ar@{>->}[ur]^(0.45){\tilde{h}}
          \ar@{=}[dd]
          \\
          {} & H' \, \ar@{>->}'[r]^-{v_0}[rr] & & 
          H'' \ar@{->>}'[r]^-{w_0}[rr] 
          & & \tilde{H}
          \\
          S\!G' \,
          \ar@{>->}[ur]_(0.6){h'}
          \ar@{>->}[rr]_-{S\!v} & & S\!G''
          \ar@{>..>}[ur]_(0.6){h''}
          \ar@{->>}[rr]_-{S\!w} & & S\!\tilde{G}
          \ar@{>->}[ur]_(0.6){\tilde{h}}
        }
    \end{gathered}
    \end{equation}
The top wall in (\ref{eq:big}) is just the upper half of the commutative diagram (\ref{eq:3x3}). The back wall is the (commutative) pushout diagram of the morphisms \smash{\mbox{$H'\! \stackrel{\text{\raisebox{3.5pt}{$u_0\mspace{-7mu}$}}}{\smash{\leftarrow}} H \stackrel{\text{\raisebox{3.5pt}{$i_0$}}}{\smash{\rightarrowtail}} E$}}. The right wall is evidently commutative. The front wall in (\ref{eq:big}) is obtained by applying the exact functor $S$ to the diagram (\ref{eq:2x3}). Since $S$ is a left adjoint functor, it preserves colimits, so the front wall in (\ref{eq:big}) is (still) a pushout diagram. As $(v_0h')S\!u = v_0u_0h = t_0i_0h = (t_0e)S\!i$ and since $S\!G''$ is the pushout of
\smash{\mbox{$S\!G'\! \stackrel{\text{\raisebox{2.3pt}{$S\!u\mspace{-7mu}$}}}{\smash{\longleftarrow}} S\!G \stackrel{\text{\raisebox{2.3pt}{$S\!i$}}}{\smash{\longrightarrow}} S\!P$}}, there exists a (unique) morphism $h'' \colon S\!G'' \to H''$ such that $h''S\!v = v_0h'$ and $h''S\!t = t_0e$. The first of these identities show that the left square in the bottom wall in (\ref{eq:big}) is commutative. It follows from the universal property of the pushout $S\!G''$ that the right square in the bottom wall is commutative as well.
By applying the Snake Lemma to this bottom wall, we see that $h''$ is monic (as $h'$ and $\tilde{h}$ are so) and that the cokernel $J''$ of $h''$ sits in a short exact sequence $0 \to J' \to J'' \to \tilde{J} \to 0$. Since $J',\tilde{J} \in \Wi[]$ it follows that  $J'' \in \Wi[]$. Since $h$, $h'$, $h''$, and $\tilde{h}$ are (admissible) monomorphisms in $\Ui[]$ whose cokernels belong to $\Wi[]$ (which are the trivally cofibrant objects in $\Ui[]$), they are trivial cofibrations in the exact model structure on $\Ui[]$; see \cite[~Def.~3.1]{Gil2011}. In particular, $h$, $h'$, $h''$, and $\tilde{h}$ are weak equivalences in $\Ui[]$ and therefore isomorphisms in $\Ho(\Ui[])$.  The commutative diagram (\ref{eq:big}) now shows that in the homotopy category $\Ho(\Ui[])$, the diagram (\ref{eq:S-standard-triangle}) is isomorphic to 
\begin{equation}
  \label{eq:standard-triangle-GInj}
  \xymatrix{
    H \ar[r]^-{u_0} & H' \ar[r]^-{v_0} & H'' \ar[r]^-{w_0} & \tilde{H}
  }\!.
\end{equation}
By definition, and by commutativity of the back wall in (\ref{eq:big}), the diagram   (\ref{eq:standard-triangle-GInj}) is a standard triangle in $\sGInj(\class A)$, and consequently, (\ref{eq:S-standard-triangle}) is a distinguished triangle in $\Ho(\Ui[])$.
\end{proof}

\begin{corollary}
  \label{cor:SF-imples-GProj-GInj}
  If there exists a Sharp--Foxby adjunction $(S,T)$ on $\class A$, then there is an equivalence of triangulated categories, $\sGProj(\class A) \simeq \sGInj(\class A)$.
\end{corollary}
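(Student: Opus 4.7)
The plan is to simply chain together the results established earlier in the paper. Theorem~\ref{Ho} shows that a Sharp--Foxby adjunction $(S,T)$ on $\cat{A}$ induces a Quillen equivalence between the model categories $\Up[]$ and $\Ui[]$ of Theorems~\ref{thmUp} and \ref{thmUi}, and that the derived adjunction $(\mathbf{L}S,\mathbf{R}T)$ is in fact an adjoint equivalence of triangulated categories
\begin{displaymath}
\mathrm{Ho}(\Up[]) \;\simeq\; \mathrm{Ho}(\Ui[])\,.
\end{displaymath}

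Next, I would invoke Theorems~\ref{thmUp} and \ref{thmUi}, each of which supplies a triangulated equivalence
\begin{displaymath}
\mathrm{Ho}(\Up[]) \,\simeq\, \sGProj(\class A)
\qquad\text{and}\qquad
\mathrm{Ho}(\Ui[]) \,\simeq\, \sGInj(\class A)\,.
\end{displaymath}
These equivalences are the ones coming from the general machinery recalled in \ref{triangulated}, where the identification $\Ho(\cat{A}) \simeq \cat{A}_\mathrm{cf}/\mspace{-4mu}\sim$ is upgraded to an equivalence of triangulated categories once one equips $\Ho(\cat{A})$ with the distinguished triangles transported from the Frobenius stable category $\cat{A}_\mathrm{cf}/\mspace{-4mu}\sim$.

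Composing the three triangulated equivalences then yields the desired triangulated equivalence $\sGProj(\class A) \simeq \sGInj(\class A)$. There is no real obstacle here, as all the work has been done in Theorem~\ref{Ho} (the most delicate step being the verification, carried out in detail there, that $\mathbf{L}S$ sends standard triangles to distinguished triangles); the corollary is a one-line deduction.
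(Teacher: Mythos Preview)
Your proposal is correct and matches the paper's own proof essentially verbatim: the paper simply chains the triangulated equivalences $\sGProj(\class A) \simeq \mathrm{Ho}(\Up[]) \simeq \mathrm{Ho}(\Ui[]) \simeq \sGInj(\class A)$ from Theorems~\ref{thmUp}, \ref{Ho}, and \ref{thmUi}.
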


\begin{proof}
By Theorems~\ref{thmUp}, \ref{Ho}, and \ref{thmUi} there are the following equivalences of triangulated categories, $\sGProj(\class A) \,\simeq\, \mathrm{Ho}(\Up[]) \,\simeq\, \mathrm{Ho}(\Ui[]) \,\simeq\, \sGInj(\class A)$.
\end{proof}

\begin{remark}
\label{virtally-gorenstein}
Before closing this section, we record a biproduct of Proposition~\ref{Dalezios} concerning virtually Gorenstein rings, which should be well known. We recall from \cite{ABgHKr,Beligiannis-Reiten-ho-aspects} that an Artin algebra $A$ is called virtually Gorenstein if $(\GProj(A))^{\bot}=^{\bot}\!\!(\GInj(A))$. The same notion for commutative rings has also been studied in \cite{Khoshchehreh-gorenstein-homology}. In what follows, assume that $A$ is an Artin algebra or a commutative noetherian ring with finite Krull dimension. In both cases, it is well known \cite{Beligiannis-Reiten-ho-aspects,gillespie-recollement,HKr05} that there are Hovey triples 
\[(\GProj(A),(\GProj(A))^{\bot},\mathrm{Mod}(A))\,\,\,\,\,\,\ \mbox{and} \,\,\,\,\,\,\  (\mathrm{Mod}(A),^{\bot}\!\!(\GInj(A)),\GInj(A)).\]
Applying Proposition~\ref{Dalezios} in the case where $F=G=I_{\mathrm{Mod}(A)}$, we obtain that virtually Gorensteiness of $A$ implies that the identity is a Quillen equivalence between the two model structures. Therefore the homotopy categories of these two models are, in fact, isomorphic. In case $A$ is, in addition, commutative Gorenstein we recover the analogous statement for Gorenstein rings (see the comments after Theorem 8.6 in \cite{hovey}).
\end{remark}

\enlargethispage{4.3ex}

\section{The case of chain complexes}
\label{sec:Ch}

Recall from the beginning of Section~\ref{Preliminaries} that $\cat{A}$ always denotes any bicomplete abelian category with enough projectives and enough injectives. In this section, we consider the abelian category $\Ch(\mathcal{A})$ of unbounded chain complexes in $\class A$ and prove that, under suitable conditions, a Sharp--Foxby adjunction $(S,T)$ on $\class A$ induces a Sharp--Foxby adjunction on $\Ch(\mathcal{A})$ by degreewise application of the functors $S$ and $T$. First we recall the following.
 
\begin{ipg} 
   The \emph{finitistic projective dimension}, $\FPD(\class A)$, of $\class A$ is defined as 
$$\FPD(\class A)=\mathrm{sup}\{\mathrm{pd}_{\class A} M \,|\, \textrm{ $M$ is an object in $\class A$ with finite projective dimension}\}.$$ 
Dually, the \emph{finitistic injective dimension}, $\FID(\class A)$, of $\class A$ is
$$\FID(\class A)=\mathrm{sup}\{\mathrm{id}_{\class A} M \,|\, \textrm{ $M$ is an object in $\class A$ with finite injective dimension}\}.$$ 
The \emph{finitistic Gorenstein projective dimension}, $\FGPD(\class A)$, and the \emph{finitistic Gorenstein injective dimension},  $\FGID(\class A)$, are defined similarly.
\end{ipg}

For most abelian categories that appear in applications, the finitistic dimensions defined above turn out to be finite. As in \cite[(proofs of) Thms.~2.28 and 2.29]{HHl04a} one easily proves:

\begin{lemma}
  \label{finitistic}
  There are equalities $\FGPD(\class A)=\FPD(\class A)$ and $\FGID(\class A)=\FID(\class A)$. Thus, if $\,\FPD(\class A)$, respectively, $\FID(\class A)$, is finite, then so is $\FGPD(\class A)$, respectively, $\FGID(\class A)$. \qed
\end{lemma}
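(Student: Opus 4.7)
The plan is to prove both equalities by establishing the two inequalities in each. I will focus on the projective statement $\FGPD(\cat{A}) = \FPD(\cat{A})$; the injective statement is formally dual, using that a Gorenstein injective object of finite injective dimension is injective and the dual existence lemma for Gorenstein injective coresolutions. All of the ingredients I need have already been invoked in the proof of Theorem~\ref{thmUp}: namely, the identity $\GProj(\cat{A}) \cap \Wp[] = \operatorname{Proj}(\cat{A})$ from \cite[Prop.~2.27]{HHl04a}, and the existence result \cite[Lem.~2.17]{CFH-06} producing, for any $M$ with $\GpdA(M) = n < \infty$, a short exact sequence $0 \to M \to A' \to G' \to 0$ in $\cat{A}$ with $G' \in \GProj(\cat{A})$ and $\pdA(A') = n$.

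For the direction $\FPD(\cat{A}) \leq \FGPD(\cat{A})$, I would argue that $\GpdA(M) = \pdA(M)$ whenever $\pdA(M) < \infty$. The bound $\GpdA(M) \leq \pdA(M)$ is automatic since projectives are Gorenstein projective. For the reverse bound, if $\GpdA(M) = k \leq \pdA(M) < \infty$, then any Gorenstein projective resolution $0 \to G \to G_{k-1} \to \cdots \to G_0 \to M \to 0$ forces its kernel $G$ to have finite projective dimension (as $M$ does), so $G \in \GProj(\cat{A}) \cap \Wp[] = \operatorname{Proj}(\cat{A})$, yielding $\pdA(M) \leq k$. Hence each such $M$ contributes the same value to the supremum defining $\FGPD(\cat{A})$, and the desired inequality follows by taking the supremum.

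For the direction $\FGPD(\cat{A}) \leq \FPD(\cat{A})$, let $M$ be any object with $\GpdA(M) = n < \infty$. Applying \cite[Lem.~2.17]{CFH-06} produces a short exact sequence $0 \to M \to A' \to G' \to 0$ with $G'$ Gorenstein projective and $\pdA(A') = n$. Thus $A'$ is an object of finite projective dimension precisely $n$, which witnesses $n \leq \FPD(\cat{A})$; taking the supremum over all such $M$ gives the inequality. I do not expect a substantial obstacle in carrying this out: the only thing to watch is that \cite[Prop.~2.27]{HHl04a} and \cite[Lem.~2.17]{CFH-06}, although originally stated for modules, hold verbatim in any bicomplete abelian category with enough projectives and injectives, which is our standing assumption (as already used throughout Section~\ref{Preliminaries}).
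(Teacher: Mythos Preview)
Your overall approach is the same as the paper's, which simply defers to the proofs of \cite[Thms.~2.28 and 2.29]{HHl04a} and remarks that they carry over to a general $\cat{A}$; the ingredients you invoke (\cite[Prop.~2.27]{HHl04a} and \cite[Lem.~2.17]{CFH-06}) are precisely the ones used there, and your argument for $\FGPD(\cat{A}) \leqslant \FPD(\cat{A})$ is correct.

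There is, however, a slip in your proof of $\FPD(\cat{A}) \leqslant \FGPD(\cat{A})$. You take a \emph{Gorenstein projective} resolution $0 \to G \to G_{k-1} \to \cdots \to G_0 \to M \to 0$ and assert that $G$ has finite projective dimension ``as $M$ does''. But if the $G_i$ are merely Gorenstein projective (and hence typically of infinite projective dimension), finiteness of $\pdA(M)$ does not propagate to $G$ through this sequence. The standard fix is to take instead a \emph{projective} resolution of $M$ and use the syzygy result \cite[(proof of) Prop.~2.7]{HHl04a}: since $\GpdA(M) = k$, the $k$-th syzygy in any projective resolution is Gorenstein projective. That syzygy then lies in $\GProj(\cat{A}) \cap \Wp[] = \operatorname{Proj}(\cat{A})$, giving $\pdA(M) \leqslant k$ as you intended.
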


In $\mathcal{A}$ we have the subcategories $\Up$, $\Cp$, $\Wp$ and $\Fp$ from Definition \ref{Up}. Similarly, in $\cat{B}=\Ch(\mathcal{A})$ we have the subcategories $\Up[\cat{B}]$, $\Cp[\cat{B}]$, $\Wp[\cat{B}]$ and $\Fp[\cat{B}]$. The following result explains the relation between all these subcategories.

\begin{proposition} 
\label{Ch-p}
Assume that $\FPD(\class A)<\infty$ and let $X=\cdots \rightarrow X_{n+1}\rightarrow X_{n}\rightarrow X_{n-1}\rightarrow\cdots$ be an object in $\mathcal{B}:=\Ch(\class A)$. The following conclusions hold.
\begin{itemize}
\item [(i) ] $X$ belongs to $\Up[\cat{B}]$ if and only if every $X_{n}$ belongs to $\Up$.
\item [(ii) ] $X$ belongs to $\Cp[\cat{B}]$ if and only if every $X_{n}$ belongs to $\Cp$.
\item [(iii) ] $X$ belongs to $\Wp[\cat{B}]$ if and only if $X$ is exact and every cycle $\mathrm{Z}_{n}(X)$ belongs to $\Wp$.
\item [(iv) ] $X$ belongs to $\Fp[\cat{B}]$ if and only if every $X_{n}$ belongs to $\Fp$.
\end{itemize}
\end{proposition}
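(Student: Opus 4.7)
My plan is to establish (iii) first, then (ii), and then combine them for (i); part (iv) is just a restatement of (i) because $\Up[\cat{B}] = \Fp[\cat{B}]$ and $\Up = \Fp$ by Definition~\ref{Up}.

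For (iii), the forward direction is standard: projective objects of $\mathrm{Ch}(\cat{A})$ are contractible complexes of projectives from $\cat{A}$ (direct sums of ``disc'' complexes of the form $\cdots\to 0\to P\to P\to 0\to\cdots$), so a length-$n$ projective resolution of $X$ in $\mathrm{Ch}(\cat{A})$ forces $X$ to be exact and every cycle $\mathrm{Z}_k(X)$ to satisfy $\mathrm{pd}_\cat{A}(\mathrm{Z}_k(X))\leqslant n$. For the converse, the hypothesis $\FPD(\cat{A}) < \infty$ plays its essential role: exactness of $X$ together with cycle-wise finite projective dimension yields the uniform bound $\mathrm{pd}_\cat{A}(\mathrm{Z}_k(X))\leqslant d := \FPD(\cat{A})$ for all $k$, and an inductive horseshoe-style construction then produces a length-$d$ resolution of $X$ by projective complexes.

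For (ii), the forward direction is formal: truncating a totally acyclic complex $\mathbb{P}$ of projective complexes at degree $k$ gives an acyclic complex of projectives in $\cat{A}$ whose $k$-th cycle is $X_k$, and total acyclicity is checked against a projective $Q\in\cat{A}$ by plugging in the disc complex on $Q$, which is a projective object of $\mathrm{Ch}(\cat{A})$. The converse is more delicate: assuming every $X_k\in \GProj(\cat{A})$, I would build a totally acyclic complex of projective complexes having $X$ as a cycle by alternately taking componentwise projective covers and ``Gorenstein preenvelopes'' (via \cite[Thm~2.10]{HHl04a}) and stitching the componentwise data into short exact sequences of complexes $0\to X\to \mathbb{P}\to Y\to 0$ (with $\mathbb{P}$ projective and $Y$ componentwise Gorenstein projective) through a horseshoe argument; the exactness of $\mathrm{Hom}_{\mathrm{Ch}(\cat{A})}(-,\mathbb{Q})$ against any projective complex $\mathbb{Q}$ then reduces degreewise to the Gorenstein projective condition on each $X_k$.

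For (i), the forward implication is immediate: a bounded Gorenstein projective resolution of $X$ in $\mathrm{Ch}(\cat{A})$ provides, degreewise via (ii), a bounded Gorenstein projective resolution of each $X_k$ in $\cat{A}$. For the converse, Lemma~\ref{finitistic} supplies $\FGPD(\cat{A}) = \FPD(\cat{A}) < \infty$, hence a uniform bound $\mathrm{Gpd}_\cat{A}(X_k)\leqslant d := \FGPD(\cat{A})$; a degreewise $d$-step Gorenstein projective resolution can then be assembled, again by a horseshoe construction, into a length-$d$ resolution of $X$ by complexes which, by (ii), are Gorenstein projective in $\mathrm{Ch}(\cat{A})$. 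I expect the main obstacle to be the stitching steps in the converse directions of (ii) and (i): patching componentwise (Gorenstein) projective data into a coherent resolution in $\mathrm{Ch}(\cat{A})$ is where one must use the hypothesis $\FPD(\cat{A}) < \infty$ in an essential way to obtain a uniform bound, without which the required finite-length (and, for (ii), two-sided totally acyclic) resolutions could fail to exist.
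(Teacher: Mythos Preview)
Your approach is correct and parallels the paper's, though the organization and some details differ. The paper proves (ii) first by simply citing \cite[Thm.~2.2]{yang-liu-Gorenstein-complexes} rather than sketching the totally acyclic construction, and it handles the converse of (i) more directly than you do: instead of assembling componentwise Gorenstein projective resolutions via a horseshoe, it takes any exact sequence $0\to K\to P^{s-1}\to\cdots\to P^0\to X\to 0$ in $\Ch(\cat{A})$ where each $P^i$ is a complex of projectives (such sequences exist because $\Ch(\cat{A})$ has enough projectives), and observes that each $K_n$ is then Gorenstein projective in $\cat{A}$, whence $K$ is Gorenstein projective in $\Ch(\cat{A})$ by (ii). Your horseshoe argument for (i) does work, but only because one can arrange the componentwise Gorenstein projective epimorphisms to be \emph{special} precovers (kernels of finite projective dimension, via \cite[Thm.~2.10]{HHl04a}), so that the lifts needed to turn componentwise data into chain maps actually exist; the paper's route sidesteps this subtlety entirely by using genuine projectives until the last step. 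For (iii) both approaches coincide, the paper citing (the dual of) \cite[Thm.~3.1.3]{GR99} for the standard construction you describe.
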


\begin{proof}
Part (ii) is proved in \cite[Thm.~2.2]{yang-liu-Gorenstein-complexes} in the case $\mathcal{A}=\mathrm{Mod}(A)$ where $A$ is any ring, but the proof works in any abelian category (with enough projectives). 

In view of (ii), the ``only if'' part in (i) is clear.
To prove the ``if'' part in (i), assume that every $X_{n}$ is in $\Up$, that is, $\GpdA(X_n)<\infty$. By our assumption $\FPD(\class A)<\infty$ and by Lemma~\ref{finitistic}, it follows that $s=\sup\{\GpdA(X_{n}) \,|\, n\in\mathbb{Z}\}$ belongs to $\mathbb{N}_0$. The proof is now by induction on $s$. If $s=0$, then $X$ is even in $\Cp[\cat{B}] \subseteq \Up[\cat{B}]$ by part (ii). Now assume that $s>0$. Choose any exact sequence
\begin{displaymath}
  0 \longrightarrow K \longrightarrow P^{s-1} \longrightarrow \,\cdots\, \longrightarrow P^1 \longrightarrow P^0 \longrightarrow X \longrightarrow 0
\end{displaymath}
in $\mathcal{B}=\Ch(\class A)$ where $P^0,\ldots,P^{s-1}$ are complexes consisting of projective objects in $\cat{A}$. For each $n \in \mathbb{Z}$ we have an exact sequence $0 \to K_n \to P^{s-1}_n \to P^1_n \to P^0_n \to X_n \to 0$ in $\cat{A}$, and since $P^0_n,\ldots,P^{s-1}_n$ are projectives and $\GpdA(X_{n}) \leqslant s$, it follows that $K_n$ is Gorenstein projective; cf.~\cite[(proof of) Prop.~2.7]{HHl04a}. Thus, $K$ is a complex of Gorenstein projective objects in $\cat{A}$, which by (ii) means that $K$ is a Gorenstein projective object in $\mathcal{B}=\Ch(\class A)$. So  the exact sequence displayed above shows that $\mathrm{Gpd}_{\cat{B}}(X) \leqslant s < \infty$, that is, $X \in \Up[\cat{B}]$.

To prove (iii), let $X\in \Wp[\cat{B}]$, which means that we have an exact sequence 
\begin{equation}
  \label{m}
  0 \longrightarrow P^m \longrightarrow \,\cdots\, \longrightarrow P^1 \longrightarrow P^0 \longrightarrow X \longrightarrow 0
\end{equation}
in $\mathcal{B}=\Ch(\class A)$ where $P^0,\ldots,P^m$ are projective objects; i.e.~each $P^i$ is a split exact complex of projective objects in $\cat{A}$, and thus each cycle $\mathrm{Z}_n(P^i)$ is also projective~in~$\cat{A}$. As the complexes $P^0,\ldots,P^m$ are, in particular, exact, so is $X$ (and the same are all the kernel and cokernel complexes of the chain maps that appear in (\ref{m})). This implies that the functor $\mathrm{Z}_n(-)$ leaves the sequence (\ref{m}) exact, and the hereby obtained exact sequence
\begin{displaymath}
  0 \longrightarrow \mathrm{Z}_n(P^m) \longrightarrow \,\cdots\, \longrightarrow \mathrm{Z}_n(P^1) \longrightarrow \mathrm{Z}_n(P^0) \longrightarrow \mathrm{Z}_n(X) \longrightarrow 0
\end{displaymath}
shows that $\mathrm{Z}_n(X)$ has finite projective dimension in $\cat{A}$, that is, 
$\mathrm{Z}_{n}(X)$ belongs to $\Wp$.

The proof of the ``if'' part in (iii) is based on a standard construction; see (the dual of) \cite[Thm.~3.1.3]{GR99} (for this argument to work we make use the hypothesis $\FPD(\class A)<\infty$).

Part (iv) is just a repetition of part (i) since $\Fp[\cat{B}]=\Up[\cat{B}]$ and $\Fp = \Up$.
\end{proof}

\enlargethispage{0.2ex}

In $\mathcal{A}$ we also have the subcategories $\Ui$, $\Ci$, $\Wi$ and $\Fi$ from Definition \ref{Ui}. Similarly, in $\cat{B}=\Ch(\mathcal{A})$ we have the subcategories $\Ui[\cat{B}]$, $\Ci[\cat{B}]$, $\Wi[\cat{B}]$ and $\Fi[\cat{B}]$. By an argument dual to the proof of Proposition~\ref{Ch-p}, one shows the following result.

\begin{proposition} 
\label{Ch-i}
Assume that $\FID(\class A)<\infty$ and let $Y=\cdots \rightarrow Y_{n+1}\rightarrow Y_{n}\rightarrow Y_{n-1}\rightarrow\cdots$ be an object in $\mathcal{B}:=\Ch(\class A)$. The following conclusions hold.
\begin{itemize}
\item [(i) ] $Y$ belongs to $\Ui[\cat{B}]$ if and only if every $Y_{n}$ belongs to $\Ui$.
\item [(ii) ] $Y$ belongs to $\Ci[\cat{B}]$ if and only if every $Y_{n}$ belongs to $\Ci$.
\item [(iii) ] $Y$ belongs to $\Wi[\cat{B}]$ if and only if $Y$ is exact and every cycle $\mathrm{Z}_{n}(Y)$ belongs to $\Wi$.
\item [(iv) ] $Y$ belongs to $\Fi[\cat{B}]$ if and only if every $Y_{n}$ belongs to $\Fi$. \qed
\end{itemize}
\end{proposition}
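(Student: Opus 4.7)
The plan is to mirror the proof of Proposition~\ref{Ch-p} step by step, everywhere replacing projective constructions by their injective duals: $\GProj(\cat{A})$ by $\GInj(\cat{A})$, the role of ``$\Cp$'' by that of ``$\Fi$'', and finite projective resolutions $0 \to K \to P^{s-1} \to \cdots \to P^0 \to X \to 0$ by finite injective coresolutions $0 \to Y \to I^0 \to \cdots \to I^{s-1} \to C \to 0$. The duality goes through because the standing hypotheses on $\cat{A}$ (bicomplete with enough projectives \emph{and} enough injectives) are themselves self-dual, because $\FGID(\cat{A})=\FID(\cat{A})<\infty$ by Lemma~\ref{finitistic}, and because every ingredient from \cite{HHl04a} invoked in the proof of Proposition~\ref{Ch-p} has a well-known injective analogue.

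First I would establish part (iv), which is the injective analogue of part (ii) of Proposition~\ref{Ch-p}: namely, $Y \in \GInj(\Ch(\cat{A}))$ if and only if every $Y_n \in \GInj(\cat{A})$. This is the dual of \cite[Thm.~2.2]{yang-liu-Gorenstein-complexes}, whose proof transfers verbatim to any bicomplete abelian category with enough injectives. With (iv) in hand I would derive (i) as follows. The ``only if'' direction is immediate from (iv), since any finite Gorenstein injective coresolution of $Y$ in $\Ch(\cat{A})$ restricts in each degree to a finite Gorenstein injective coresolution of $Y_n$ in $\cat{A}$. For the ``if'' direction, assume each $Y_n \in \Ui$; by Lemma~\ref{finitistic} the integer $s:=\sup\{\GidA(Y_n)\mid n\in\Z\}$ is finite. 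Choose a coresolution $0 \to Y \to I^0 \to \cdots \to I^{s-1} \to C \to 0$ in $\Ch(\cat{A})$ with each $I^j$ a complex of injective objects of $\cat{A}$. In each degree this yields an injective coresolution of $Y_n$ of length $s$, so by the injective dual of \cite[(proof of) Prop.~2.7]{HHl04a} the cokernel $C_n$ is Gorenstein injective in $\cat{A}$. Part (iv) then shows $C \in \GInj(\Ch(\cat{A}))$, whence $Y$ has Gorenstein injective dimension at most $s$ in $\Ch(\cat{A})$.

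Part (ii) is a tautological restatement of (i) because, by Definition~\ref{Ui}, $\Ci=\Ui$ and hence $\Ci[\cat{B}]=\Ui[\cat{B}]$. For (iii), the ``only if'' direction uses that every injective object of $\Ch(\cat{A})$ is a split exact complex of injective objects of $\cat{A}$; consequently, a finite injective coresolution of $Y$ forces $Y$ to be exact, and then applying the exact functor $\mathrm{Z}_n(-)$ degreewise produces a finite injective coresolution of $\mathrm{Z}_n(Y)$ in $\cat{A}$, so $\mathrm{Z}_n(Y)\in\Wi$. The ``if'' direction relies on the dual of the standard construction in \cite[Thm.~3.1.3]{GR99}: from an exact complex $Y$ whose cycles all have injective dimension bounded by $d:=\FID(\cat{A})$, one splices together finite injective coresolutions of the individual cycles $\mathrm{Z}_n(Y)$ into a single finite injective coresolution of the complex $Y$ itself. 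Part (iv) is then simply the characterization already proved in the first step.

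The genuine obstacle is the ``if'' direction of (iii): one must assemble the separately chosen finite injective coresolutions of the cycles $\mathrm{Z}_n(Y)$ into a coherent finite injective coresolution of $Y$, and it is precisely the uniform bound $\FID(\cat{A})<\infty$ that keeps this splicing procedure from drifting to infinity. Everything else is a formal consequence of (iv), of the definitions in \ref{Ui}, and of the elementary fact that injective complexes over $\Ch(\cat{A})$ are degreewise split with injective cycles.
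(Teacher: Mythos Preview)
Your proposal is correct and follows exactly the approach intended by the paper, which states only that Proposition~\ref{Ch-i} is proved ``by an argument dual to the proof of Proposition~\ref{Ch-p}''. You have correctly identified the one subtlety in carrying out this dualization: the labeling of the four parts does \emph{not} match up directly, since $\Cp=\GProj(\cat{A})$ dualizes to $\Fi=\GInj(\cat{A})$ while $\Fp=\Up$ dualizes to $\Ci=\Ui$, so part~(ii) of Proposition~\ref{Ch-p} corresponds to part~(iv) here, and parts~(i)/(iv) there correspond to parts~(i)/(ii) here. Your ordering---first prove~(iv) via the dual of \cite[Thm.~2.2]{yang-liu-Gorenstein-complexes}, then deduce~(i) from~(iv) using the uniform bound from Lemma~\ref{finitistic}, note that~(ii) is a tautology, and finally argue~(iii) by dualizing the cycle argument---is precisely the right unwinding of the paper's one-line proof.
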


We can now prove the main result of this section.

\begin{theorem}
  \label{Ch}
   Let $(S,T)$ be a Sharp--Foxby adjunction on $\cat{A}$, in particular,  $\sGProj(\cat{A})$ and $\sGInj(\cat{A})$ are equivalent as triangulated categories by Corollary~\ref{cor:SF-imples-GProj-GInj}. If $\,\FPD(\cat{A})<\infty$ and $\FID(\cat{A})<\infty$, then degreewise application of $S$ and $T$ yields a Sharp--Foxby adjunction on $\cat{B}=\mathrm{Ch}(\cat{A})$, and hence $\sGProj(\cat{B})$ and $\sGInj(\cat{B})$ are equivalent as triangulated categories.
\end{theorem}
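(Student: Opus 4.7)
The plan is to define endofunctors $\bar{S}$ and $\bar{T}$ on $\cat{B}=\mathrm{Ch}(\cat{A})$ by degreewise application of $S$ and $T$, verify that $(\bar{S},\bar{T})$ is an adjunction on $\cat{B}$, and then check the six conditions (SF1)--(SF6) of Definition~\ref{Sharp-Foxby} using Propositions~\ref{Ch-p} and \ref{Ch-i} to translate statements about complexes into statements about their components and cycles. Once this is done, the triangulated equivalence $\sGProj(\cat{B}) \simeq \sGInj(\cat{B})$ is immediate from Corollary~\ref{cor:SF-imples-GProj-GInj} applied to $\cat{B}$.

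First I would set $(\bar{S}X)_n = S(X_n)$ with differential $S(d_n^X)$ (and dually for $\bar{T}$); these are evidently endofunctors of $\cat{B}$. The natural adjunction isomorphism $\Hom_{\cat{A}}(SA,B) \cong \Hom_{\cat{A}}(A,TB)$ assembles degreewise into an isomorphism $\Hom_{\cat{B}}(\bar{S}X,Y) \cong \Hom_{\cat{B}}(X,\bar{T}Y)$, since a chain map is just a degreewise family of morphisms commuting with the differentials, and the unit and counit of $(\bar{S},\bar{T})$ are accordingly given degreewise by those of $(S,T)$. With this in hand, conditions (SF2) and (SF4) for $(\bar{S},\bar{T})$ become immediate: a short exact sequence in $\cat{B}$ with terms in $\Up[\cat{B}]$ (respectively, $\Ui[\cat{B}]$) is degreewise short exact with each term in $\Up$ (respectively, $\Ui$) by Propositions~\ref{Ch-p}(i) and \ref{Ch-i}(i), so (SF2) (respectively, (SF4)) for $(S,T)$ applies in each degree. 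Conditions (SF5) and (SF6) are equally formal: the unit at $X \in \Up[\cat{B}]$ is degreewise the unit at $X_n \in \Up$, which is an isomorphism by (SF5), and dually for the counit. The easy halves of (SF1) and (SF3), namely $\bar{S}(\Up[\cat{B}]) \subseteq \Ui[\cat{B}]$ and $\bar{T}(\Ui[\cat{B}]) \subseteq \Up[\cat{B}]$, follow in the same way by combining Propositions~\ref{Ch-p}(i) and \ref{Ch-i}(i) with the corresponding halves of (SF1) and (SF3) for $(S,T)$.

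The main obstacle will be showing $\bar{S}(\Wp[\cat{B}]) \subseteq \Wi[\cat{B}]$ (and dually $\bar{T}(\Wi[\cat{B}]) \subseteq \Wp[\cat{B}]$), because the characterization of $\Wp[\cat{B}]$ in Proposition~\ref{Ch-p}(iii) involves both exactness of the complex and a condition on its cycles, and degreewise application of $S$ does not \emph{a priori} preserve exactness of arbitrary complexes in $\cat{A}$. Here I would proceed as follows: if $X \in \Wp[\cat{B}]$ then Proposition~\ref{Ch-p}(iii) says $X$ is exact with each cycle $\mathrm{Z}_n(X) \in \Wp \subseteq \Up$, so the short exact sequences
\[
0 \longrightarrow \mathrm{Z}_n(X) \longrightarrow X_n \longrightarrow \mathrm{Z}_{n-1}(X) \longrightarrow 0
\]
have all their terms in $\Up$; condition (SF2) for $(S,T)$ therefore keeps these sequences exact after applying $S$, showing that $\bar{S}X$ is exact and that its cycles satisfy $\mathrm{Z}_n(\bar{S}X) = S(\mathrm{Z}_n(X))$. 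Condition (SF1) for $(S,T)$ then places each $\mathrm{Z}_n(\bar{S}X)$ in $\Wi$, and Proposition~\ref{Ch-i}(iii) concludes that $\bar{S}X \in \Wi[\cat{B}]$. The dual argument for $\bar{T}$ uses Proposition~\ref{Ch-i}(iii), (SF4), (SF3), and Proposition~\ref{Ch-p}(iii) in the analogous roles. This step is also precisely where the running assumptions $\FPD(\cat{A})<\infty$ and $\FID(\cat{A})<\infty$ are essential, as they are exactly what is needed to invoke Propositions~\ref{Ch-p} and \ref{Ch-i}.
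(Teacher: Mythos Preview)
Your proposal is correct and follows essentially the same approach as the paper: define $\bar{S},\bar{T}$ degreewise, verify the adjunction, and reduce (SF1)--(SF6) for $\cat{B}=\mathrm{Ch}(\cat{A})$ to the corresponding conditions on $\cat{A}$ via Propositions~\ref{Ch-p} and~\ref{Ch-i}. In fact you supply more detail than the paper does on the step $\bar{S}(\Wp[\cat{B}]) \subseteq \Wi[\cat{B}]$, where the paper simply asserts that this ``follows from Propositions~\ref{Ch-p} and~\ref{Ch-i}'' while you correctly spell out the argument via the short exact sequences $0 \to \mathrm{Z}_n(X) \to X_n \to \mathrm{Z}_{n-1}(X) \to 0$ and (SF2).
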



\begin{proof}
  Write $\bar{S}$ and $\bar{T}$ for the endofunctors on $\cat{B}=\mathrm{Ch}(\cat{A})$ that are given by degreewise application of $S$ and $T$, and let $\eta$ and $\varepsilon$ be the unit and counit of the adjunction $(S,T)$~on~$\cat{A}$. It is straightforward to verify that $(\bar{S},\bar{T})$ is an adjunction on $\cat{B}$ with unit $\bar{\eta}$ and counit $\bar{\varepsilon}$ given by $(\bar{\eta}_X)_n = \eta_{X_n}$ and $(\bar{\varepsilon}_X)_n = \varepsilon_{X_n}$, where $X$ is a chain complex and $n$ is an integer.
  
  By assumption, $S$ restricts to an exact functor $S \colon \Up \to \Ui$ which maps $\Wp$ to $\Wi$; see (SF1) and (SF2) in Definition \ref{Sharp-Foxby}. It therefore follows from Propositions \ref{Ch-p} and \ref{Ch-i} that $\bar{S}$ restricts to an exact functor $\bar{S} \colon \Up[\cat{B}] \to \Ui[\cat{B}]$ which maps $\Wp[\cat{B}]$ to $\Wi[\cat{B}]$, that is, the adjunction $(\bar{S},\bar{T})$ also satisfies conditions (SF1) and (SF2). A similar argument shows that this adjunction satisfies (SF3) and (SF4) as well. By (SF5) in Definition \ref{Sharp-Foxby} we know that the unit $\eta_A \colon A \to TSA$ of $(S,T)$ is an isomorphism for $A \in \Up$. From the definition of $\bar{\eta}$ and from Proposition \ref{Ch-p} it now follows that $\bar{\eta}_X \colon X \to \bar{T}\bar{S}X$ is an isomorphism for $X \in \Up[\cat{B}]$, that is, $(\bar{S},\bar{T})$ satisfies (SF5). Similarly, $(\bar{S},\bar{T})$ also satisfies condition (SF6).
\end{proof} 

\begin{corollary}
  \label{Ch-cor}
   Let $(S,T)$ be a Sharp--Foxby adjunction on $\cat{A}$ for which $\FPD(\cat{A})<\infty$ and $\FID(\cat{A})<\infty$. Then degreewise application of $S$ and $T$ yields a Sharp--Foxby adjunction on the category $\mathrm{Ch}^2(\cat{A})$ of double complexes (also called bicomplexes) in $\cat{A}$.
\end{corollary}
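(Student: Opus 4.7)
The plan is simply to iterate Theorem~\ref{Ch}, using the identification $\mathrm{Ch}^2(\cat{A}) = \mathrm{Ch}(\mathrm{Ch}(\cat{A}))$. Theorem~\ref{Ch} applied to $\cat{A}$ already produces a Sharp--Foxby adjunction $(\bar S,\bar T)$ on $\cat{B}:=\mathrm{Ch}(\cat{A})$ by degreewise application of $S$ and $T$. A second application of Theorem~\ref{Ch}, this time with $\cat{B}$ in the role of $\cat{A}$, would then yield a Sharp--Foxby adjunction on $\mathrm{Ch}(\cat{B})=\mathrm{Ch}^2(\cat{A})$ whose action is the componentwise (i.e.\ bidegreewise) application of $S$ and $T$.

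To run this second application, the only thing to verify is that the hypotheses $\FPD(\cat{B})<\infty$ and $\FID(\cat{B})<\infty$ are inherited from $\cat{A}$. I will focus on $\FPD$, the case of $\FID$ being completely dual. By Proposition~\ref{Ch-p}(iii), an object $X\in\Wp[\cat{B}]$ is an exact complex such that every cycle $\mathrm{Z}_n(X)$ lies in $\Wp$, and hence satisfies $\pdA{\mathrm{Z}_n(X)}\le d:=\FPD(\cat{A})$. Feeding this into the short exact sequences $0\to \mathrm{Z}_n(X)\to X_n\to \mathrm{Z}_{n-1}(X)\to 0$ gives $\pdA{X_n}\le d$ as well. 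The remaining task is to turn these pointwise bounds into a uniform bound on $\mathrm{pd}_{\cat{B}}(X)$; I would do this by taking a Cartan--Eilenberg style resolution of $X$ by contractible complexes of projectives $\bigoplus_n D^n(P_n)$, the length of which is controlled by $d$ (this is essentially the construction invoked in the ``if'' direction of Proposition~\ref{Ch-p}(iii) via \cite[Thm.~3.1.3]{GR99}). One then concludes $\FPD(\cat{B})<\infty$.

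The step I expect to be the main obstacle is precisely this passage from termwise to complexwise projective dimension, since $\mathrm{pd}_{\cat{B}}(X)$ is defined via projective objects of $\mathrm{Ch}(\cat{A})$ (contractible complexes of projectives), not via degreewise projective resolutions. Once that uniform bound is in hand, the corollary follows by a purely formal reapplication of Theorem~\ref{Ch}.
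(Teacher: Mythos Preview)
Your proposal is correct and follows essentially the same route as the paper: identify $\mathrm{Ch}^2(\cat{A})$ with $\mathrm{Ch}(\mathrm{Ch}(\cat{A}))$, apply Theorem~\ref{Ch} a second time, and note that the only nontrivial point is the finiteness of $\FPD(\mathrm{Ch}(\cat{A}))$ and $\FID(\mathrm{Ch}(\cat{A}))$, which is extracted from (the proof of) Proposition~\ref{Ch-p}(iii) and its dual. The paper in fact asserts the slightly sharper statement that $\FPD(\mathrm{Ch}(\cat{A}))=\FPD(\cat{A})$ (and dually), but your bound via the Cartan--Eilenberg style construction from \cite[Thm.~3.1.3]{GR99} is exactly what is meant there and suffices for the corollary.
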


\begin{proof}
  The category $\mathrm{Ch}^2(\cat{A})$ of double complexes in $\cat{A}$ is naturally identified with the ca\-te\-go\-ry $\mathrm{Ch}(\mathrm{Ch}(\cat{A}))$. Thus, the desired conclusion follows by applying Theorem~\ref{Ch} to the category $\mathrm{Ch}(\cat{A})$ (in place of $\cat{A}$). However, to do this we must first argue that the theorem's hypothesis is satisfied, i.e.~that the numbers $\FPD(\mathrm{Ch}(\cat{A}))$ and $\FID(\mathrm{Ch}(\cat{A}))$ are finite. But is immediate from (the proofs of) Propositions~\ref{Ch-p}(iii) and \ref{Ch-i}(iii) that these numbers agree with $\FPD(\cat{A})$ and $\FID(\cat{A})$, which are finite by assumption.
\end{proof}

\enlargethispage{5.1ex}

\begin{example}
\label{ex.ch}
Let $A$ be a commutative noetherian ring with a dualizing module. By Example \ref{Auslander-Bass} there exists a Sharp--Foxby adjunction on $\mathrm{Mod}(A)$. The finitistic projective/injective dimensions of $\mathrm{Mod}(A)$ are usually referred to as the finitistic projective/injective dimensions of the ring $A$, and they are denoted by $\FPD(A)$ and $\FID(A)$. These numbers are finite, indeed, one has $\FPD(A) = \operatorname{dim}\,A \geqslant \FID(A)$ by \cite[Thm.~II.(3.2.6) p.~84]{LGrMRn71} and \cite[Cor.~5.5]{HBs62}, and $\operatorname{dim}\,A$ is finite by \cite[Cor.~V.7.2]{rad}.

Theorem~\ref{Ch} and Corollary~\ref{Ch-cor} now imply that the category $\Ch(A)$ of chain complexes and the category $\Ch^2(A)$ of double complexes of $A$-modules both have Sharp--Foxby adjunctions. In particular, there are by Corollary~\ref{cor:SF-imples-GProj-GInj} equivalences of triangulated categories,
\begin{displaymath}
  \sGProj(\Ch(A)) \simeq \sGInj(\Ch(A))
  \qquad \text{and} \qquad
  \sGProj(\Ch^2(A)) \simeq \sGInj(\Ch^2(A))\;.
\end{displaymath}
\end{example}

\section{The case of quiver representations}
\label{sec:Rep}

Recall from the beginning of Section~\ref{Preliminaries} that $\cat{A}$ always denotes any bicomplete abelian category with enough projectives and enough injectives. In this section, we consider the abelian category $\rep{Q,\class A}$ of $\cat{A}$-valued representations of a quiver $Q$ and prove that, under suitable conditions, a Sharp--Foxby adjunction $(S,T)$ on $\class A$ induces a Sharp--Foxby adjunction on $\rep{Q,\class A}$ by vertexwise application of the functors $S$ and $T$.

We start by collecting a few facts about quivers that we need.

\begin{ipg}
  Let $Q$ be a quiver with vertex set $Q_0$ and arrow set $Q_1$.
  Unless anything else is men\-tioned, there will be no restrictions on $Q$; it may have infinitely many vertices, loops and/or oriented cycles, and there may be infinitely many or no arrows from one vertex to another. For an arrow $a \in Q_1$ we write $s(a)$ and $t(a)$ for its source and target. Following \cite[Sect.~3]{EE-projective-quivers}, a quiver is called \emph{left rooted}, respectively, \emph{right rooted}, if it does \textsl{not} contain any path of the form \mbox{$\cdots\to\bullet\to\bullet\to\bullet$}\,, respectively, of the form $\bullet\to\bullet\to\bullet\to\cdots$\,. Recall that for every vertex $v \in Q_0$, the evaluation functor \mbox{$\rep{Q,\class A} \to \cat{A}$} given by \mbox{$X \mapsto X(v)$} has a left adjoint $f_v$, which for $A \in \cat{A}$ is given by $f_v(A)(w) = \bigoplus_{v \to \cdots \to w}A$, where the coproduct contains one copy of $A$ for each path from $v$ to $w$ in $Q$ (including the trivial path at $v$ in the case where $w=v$). This copro\-duct might be empty, in which case $f_v(A)(w) =0$. Evidently, the counit of this adjunction $f_v(X(v)) \to X$ becomes an epimorphism in $\cat{A}$ when evaluated on $v$, and this 
explains the epimorphism in the exact sequence below, see e.g.~\cite[Sec.~3.1]{EHS-total-acyclicity-quivers}.
\begin{equation}
  \label{eq:quiver-ses}
0\longrightarrow\bigoplus\limits_{a \in Q_1}f_{t(a)}(X(s(a)))\longrightarrow\bigoplus\limits_{v\in Q_0}f_v(X(v))\longrightarrow X\longrightarrow 0\;.
\end{equation}  
Finally, for every vertex $v \in Q_0$ and every $X$ in $\rep{Q,\class A}$ we write
\begin{displaymath}
  \xymatrix{
  \displaystyle \bigoplus_{a \in Q_1 \text{ with } t(a)=v} \mspace{-30mu} X(s(a))
  \ar[r]^-{\varphi^X_v} & X(v)
  }
  \qquad \text{and} \qquad
  \xymatrix{  
    X(v) \ar[r]^-{\psi^X_v} & \displaystyle
   \mspace{-35mu} \prod_{a \in Q_1 \text{ with } s(a)=v} 
   \mspace{-30mu} X(t(a))
  }
\end{displaymath}
for the canonical morphisms.
\end{ipg}

In $\mathcal{A}$ we have the subcategories $\Up$, $\Cp$, $\Wp$ and $\Fp$ from Definition \ref{Up}. Similarly, in $\cat{B}=\rep{Q,\class A}$ we have the subcategories $\Up[\cat{B}]$, $\Cp[\cat{B}]$, $\Wp[\cat{B}]$ and $\Fp[\cat{B}]$. The following result explains the relation between all these subcategories.

\enlargethispage{6.1ex}

\begin{proposition} 
  \label{Rep-p}
   Assume that $\FPD(\class A)<\infty$ and let $Q$ be a left rooted quiver. For every object $X$ in $\mathcal{B}:=\rep{Q,\class A}$ the following conclusions hold.
\begin{itemize}
\item [(i) ] $X$ belongs to $\Up[\cat{B}]$ if and only if $X(v)$ belongs to $\Up$ for every vertex $v$ in $Q$.
\item [(ii) ] $X$ belongs to $\Cp[\cat{B}]$ if and only if $X(v)$ belongs to $\Cp$ and $\varphi^X_v$ is a monomorphism with $\coker \varphi^X_v$ in $\Cp$ for every vertex $v$ in $Q$.
\item [(iii) ] $X$ belongs to $\Wp[\cat{B}]$ if and only if $X(v)$ belongs to $\Wp$ for every vertex $v$ in $Q$.
\item [(iv) ] $X$ belongs to $\Fp[\cat{B}]$ if and only if $X(v)$ belongs to $\Fp$ for every vertex $v$ in $Q$.
\end{itemize}
\end{proposition}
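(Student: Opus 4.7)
The plan is to adapt the strategy of Proposition~\ref{Ch-p} to the setting of $\cat{A}$-valued representations of a left rooted quiver, proving (ii), (iii), (i), (iv) in that order. Part (iv) is automatic, since $\Fp[\cat{B}] = \Up[\cat{B}]$ and $\Fp = \Up$ by definition, so it coincides with (i).

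For part (ii), the characterization of Gorenstein projective representations over a left rooted quiver is a known result in the spirit of \cite{EHS-total-acyclicity-quivers}. The ``only if'' direction follows by evaluating a totally acyclic complex of projectives at each vertex, together with a direct analysis of the morphisms $\varphi^X_v$ for projective representations (which are summands of coproducts $\bigoplus_v f_v(P_v)$ with $P_v \in \cat{A}$ projective). The ``if'' direction is the substantive part: one uses transfinite induction along the well-founded ordering on $Q_0$ furnished by the left rooted hypothesis (cf.~\cite[Sect.~3]{EE-projective-quivers}), together with the exact sequence (\ref{eq:quiver-ses}), to build a totally acyclic complex of projectives in $\rep{Q,\cat{A}}$ realizing $X$ as its zeroth cycle.

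For part (iii), the ``only if'' direction is immediate: projective representations evaluate vertexwise to projective objects in $\cat{A}$, so a finite projective resolution of $X$ gives a finite projective resolution of each $X(v)$. For the ``if'' direction, $s := \sup_v \mathrm{pd}_{\cat{A}}(X(v))$ is finite by the assumption $\FPD(\cat{A}) < \infty$. Choosing a partial projective resolution $0 \to K \to P^{s-1} \to \cdots \to P^0 \to X \to 0$ of length $s$ in $\rep{Q,\cat{A}}$ and evaluating vertexwise, we find that each $K(v)$ is projective in $\cat{A}$. Applying the exact sequence (\ref{eq:quiver-ses}) to $K$, and using that each $f_v$ sends projectives to projectives (being left adjoint to exact evaluation), identifies $K$ as the cokernel of a monomorphism between two projective objects in $\rep{Q,\cat{A}}$; hence $\mathrm{pd}_{\cat{B}}(K) \leqslant 1$ and $\mathrm{pd}_{\cat{B}}(X) < \infty$.

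Finally, part (i) follows the same pattern as Proposition~\ref{Ch-p}(i). The ``only if'' direction comes from (ii) by evaluation. For the ``if'' direction, $s := \sup_v \GpdA(X(v))$ is finite by Lemma~\ref{finitistic} combined with the hypothesis $\FPD(\cat{A}) < \infty$. Induction on $s$ using a partial projective resolution of length $s$ in $\rep{Q,\cat{A}}$ reduces to the case $s = 0$; the $s = 0$ case is handled via (\ref{eq:quiver-ses}) together with the fact that the functors $f_v$ preserve Gorenstein projectivity (for left rooted quivers), showing $\mathrm{Gpd}_{\cat{B}}(X) \leqslant 1$. The main obstacle throughout is the ``if'' direction of (ii), where the left rooted hypothesis is essentially used via transfinite induction along the vertex ordering; the remaining parts are formal bookkeeping parallel to the chain-complex case.
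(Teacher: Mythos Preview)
Your proposal is correct and follows essentially the same approach as the paper: both invoke \cite{EHS-total-acyclicity-quivers} for (ii), reduce (i) and (iii) via a partial resolution whose length is bounded by the finitistic dimension, and handle the base case using the sequence (\ref{eq:quiver-ses}) together with the fact that each $f_v$ preserves (Gorenstein) projectivity. The only cosmetic differences are that the paper phrases the induction in (i) one short exact sequence at a time and, for (iii), simply cites the dual of \cite[Prop.~6.5]{EEGR-injective-quivers} rather than writing out the direct argument you give.
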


\begin{proof}
(ii): This characterization of Gorenstein projective representations over a left rooted quiver is given in \cite[Thm.~3.5.1(b)]{EHS-total-acyclicity-quivers}. Their proof works in any abelian category.

(i): In view of (ii), the ``only if'' part is clear. Conversely, assume that $X(v) \in \Up$, that is, $\GpdA X(v)$ is finite, for every $v \in Q_0$. By  assumption, $\FPD(\class A)$ is finite and by Lemma~\ref{finitistic} it follows that $n=\sup\{\GpdA X(v) \,|\, v \in Q_0 \}$ belongs to $\mathbb{N}_0$. Note that if $Q$ has only finitely many vertices, then it automatically follows that $n<\infty$, and we do not need to assume that $\FPD(\class A)$ is finite. By induction on $n$ we now show that $\mathrm{Gpd}_{\cat{B}}(X) \leqslant n+1$ (and thus~$X \in \Up[\cat{B}]$). 

If $n=0$, then $X(v) \in \cat{A}$ is Gorenstein projective for every $v \in Q_0$. For every Gorenstein projective object $G \in \cat{A}$ and every vertex $v \in Q_0$, it follows easily from part (ii) that the representation $f_v(G)$ is Gorenstein projective, so the sequence (\ref{eq:quiver-ses}) yields $\mathrm{Gpd}_{\cat{B}}(X) \leqslant 1$.

Now assume that $n>0$ and take a short exact sequence $0 \to X' \to G \to X \to 0$ in $\cat{B}$ where $G$ is Gorenstein projective. Then $n'=\sup\{\GpdA X'(v) \,|\, v \in Q_0 \}$ satisfies $n' \leqslant n-1$, and it follows from the induction hypothesis that $\mathrm{Gpd}_{\cat{B}}(X') \leqslant n$. Thus $\mathrm{Gpd}_{\cat{B}}(X) \leqslant n+1$.

(iii) Dual to  \cite[Prop.~6.5]{EEGR-injective-quivers}.

(iv) Since  $\Fp=\Up$ and $\Fp[\cat{B}]=\Up[\cat{B}]$ this is just a repetition of part (i).
\end{proof}

In $\mathcal{A}$ we also have the subcategories $\Ui$, $\Ci$, $\Wi$ and $\Fi$ from Definition \ref{Ui}. Similarly, in $\cat{B}=\rep{Q,\class A}$ we have the subcategories $\Ui[\cat{B}]$, $\Ci[\cat{B}]$, $\Wi[\cat{B}]$ and $\Fi[\cat{B}]$. By an argument dual to the proof of Proposition~\ref{Rep-p}, one shows
the following result.

\begin{proposition} 
  \label{Rep-i}
   Assume that $\FID(\class A)<\infty$ and let $Q$ be a right rooted quiver. For every object $Y$ in $\mathcal{B}:=\rep{Q,\class A}$ the following conclusions hold.
\begin{itemize}
\item [(i) ] $Y$ belongs to $\Ui[\cat{B}]$ if and only if $Y(v)$ belongs to $\Ui$ for every vertex $v$ in $Q$.
\item [(ii) ] $Y$ belongs to $\Ci[\cat{B}]$ if and only if $Y(v)$ belongs to $\Ci$ and $\psi^Y_v$ is an epimorphism with $\ker \psi^Y_v$ in $\Ci$ for every vertex $v$ in $Q$.
\item [(iii) ] $Y$ belongs to $\Wi[\cat{B}]$ if and only if $Y(v)$ belongs to $\Wi$ for every vertex $v$ in $Q$.
\item [(iv) ] $Y$ belongs to $\Fi[\cat{B}]$ if and only if $Y(v)$ belongs to $\Fi$ for every vertex $v$ in $Q$. \qed
\end{itemize}
\end{proposition}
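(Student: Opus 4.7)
The plan is to dualize the proof of Proposition~\ref{Rep-p} systematically. The key dual ingredient is the right adjoint $g_v \colon \cat{A} \to \rep{Q,\cat{A}}$ to evaluation at $v \in Q_0$, given for $A \in \cat{A}$ by $g_v(A)(w) = \prod_{w \to \cdots \to v} A$ (one factor for each path from $w$ to $v$, including the trivial path when $w = v$). Right rootedness of $Q$ is exactly the hypothesis that makes the dual of \eqref{eq:quiver-ses}, namely
\begin{equation*}
  0 \longrightarrow Y \longrightarrow \prod_{v \in Q_0} g_v(Y(v)) \longrightarrow \prod_{a \in Q_1} g_{s(a)}(Y(t(a))) \longrightarrow 0\,,
\end{equation*}
exact in $\rep{Q,\cat{A}}$ for every $Y$; this is the dual of the construction in \cite[Sec.~3.1]{EHS-total-acyclicity-quivers}.

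I would prove the parts in the same order as in Proposition~\ref{Rep-p}, with the roles of $\Cp$ and $\Fp$ exchanged (under duality $\Cp=\GProj(\cat{A})$ corresponds to $\Fi=\GInj(\cat{A})$ and $\Fp=\Up$ corresponds to $\Ci=\Ui$). The characterization via the $\psi^Y_v$-condition is the dual of \cite[Thm.~3.5.1(b)]{EHS-total-acyclicity-quivers}, whose proof transfers to any bicomplete abelian category with enough injectives. The ``only if'' direction of (i) then follows from this characterization. For the ``if'' direction I would set $n = \sup\{\GidA(Y(v)) \,|\, v \in Q_0\}$, note that $n \in \mathbb{N}_0$ by the assumption $\FID(\cat{A}) < \infty$ and Lemma~\ref{finitistic}, and induct on $n$. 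When $n=0$, each $Y(v)$ is Gorenstein injective, so each $g_v(Y(v))$ is too (an immediate consequence of the $\psi$-characterization, since $\psi^{g_v(J)}_v$ is by construction an epimorphism with Gorenstein injective kernel), and the displayed sequence yields Gorenstein injective dimension at most one for $Y$. For the inductive step I would fit $Y$ into a short exact sequence $0 \to Y \to J \to Y' \to 0$ in $\cat{B}$ with $J$ Gorenstein injective, produced from the base case applied to a cosyzygy vertexwise, and then apply the induction hypothesis to $Y'$, whose vertexwise Gorenstein injective dimensions have dropped by one.

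Part (iii) is the dual of \cite[Prop.~6.5]{EEGR-injective-quivers} and follows by the argument dual to the one invoked in Proposition~\ref{Rep-p}(iii). The remaining part reduces to an earlier one via the identifications $\Ci = \Ui$ and $\Fi = \GInj(\cat{A})$. The main obstacle is establishing the Gorenstein injective characterization in the stated generality: while it is the formal dual of \cite[Thm.~3.5.1(b)]{EHS-total-acyclicity-quivers}, one has to verify that the arguments use only bicompleteness and enough injectives, and in particular that products preserve Gorenstein injectivity at the level of $g_v$ in the way that coproducts preserve Gorenstein projectivity at the level of $f_v$. Once this is in hand, the remaining parts fall out by routine dualization of the module-theoretic bookkeeping used in Proposition~\ref{Rep-p}.
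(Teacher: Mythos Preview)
Your proposal is correct and matches the paper's approach exactly: the paper's proof consists of the single sentence ``By an argument dual to the proof of Proposition~\ref{Rep-p}, one shows the following result,'' and you have carried out precisely this dualization, correctly noting that under duality the roles of the cofibrant and fibrant classes swap (so the $\psi$-characterization is the Gorenstein-injective analogue of the $\varphi$-characterization in Proposition~\ref{Rep-p}(ii)). One tiny slip: part~(iii) here is \cite[Prop.~6.5]{EEGR-injective-quivers} itself rather than its dual---it was Proposition~\ref{Rep-p}(iii) that required dualizing that result.
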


Now we prove the main result of this section.

\begin{theorem}
  \label{Rep}
   Let $(S,T)$ be a Sharp--Foxby adjunction on $\cat{A}$, in particular,  $\sGProj(\cat{A})$ and $\sGInj(\cat{A})$ are equivalent as triangulated categories by Corollary~\ref{cor:SF-imples-GProj-GInj}. If $\,\FPD(\cat{A})<\infty$ and $\FID(\cat{A})<\infty$, then vertexwise application of $S$ and $T$ yields a Sharp--Foxby adjunction on $\cat{B}=\rep{Q,\class A}$, and thus $\sGProj(\cat{B})$ and $\sGInj(\cat{B})$ are equivalent as triangulated categories.
\end{theorem}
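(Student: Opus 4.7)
The plan is to mimic, step by step, the proof of Theorem~\ref{Ch}, substituting Propositions~\ref{Rep-p} and \ref{Rep-i} for Propositions~\ref{Ch-p} and \ref{Ch-i}. First I would define endofunctors $\bar S$ and $\bar T$ on $\cat{B}=\rep{Q,\cat{A}}$ by vertexwise application, i.e.\ $(\bar S X)(v)=S(X(v))$ on vertices and $(\bar S X)(a)=S(X(a))$ on arrows, and similarly for $\bar T$. Limits and colimits in $\rep{Q,\cat{A}}$ being computed vertexwise, the hom-set bijection for $(S,T)$ assembles into a natural hom-set bijection for $(\bar S,\bar T)$, so these form an adjoint pair whose unit $\bar\eta$ and counit $\bar\varepsilon$ are given by $(\bar\eta_X)_v=\eta_{X(v)}$ and $(\bar\varepsilon_Y)_v=\varepsilon_{Y(v)}$.

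Next I would check the six axioms of Definition~\ref{Sharp-Foxby}. Since $Q$ is assumed to be both left and right rooted and since $\FPD(\cat{A})$ and $\FID(\cat{A})$ are finite, both Propositions~\ref{Rep-p} and \ref{Rep-i} apply. For (SF1), if $X\in\Up[\cat{B}]$ then Proposition~\ref{Rep-p}(i) gives $X(v)\in\Up$ at every vertex $v$, hence $S(X(v))\in\Ui$ by (SF1) for $(S,T)$, so $\bar S X\in\Ui[\cat{B}]$ by Proposition~\ref{Rep-i}(i); the inclusion $\bar S(\Wp[\cat{B}])\subseteq\Wi[\cat{B}]$ is obtained the same way using parts (iii) of the two propositions. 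For (SF2), exactness of a sequence of representations is checked vertexwise, and $S$ restricted to $\Up$ is exact by (SF2), so $\bar S$ restricted to $\Up[\cat{B}]$ is exact. Axioms (SF3) and (SF4) are entirely dual.

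For (SF5), given $X\in\Up[\cat{B}]$, Proposition~\ref{Rep-p}(i) again yields $X(v)\in\Up$ for all $v$, hence by (SF5) for $(S,T)$ each component $(\bar\eta_X)_v=\eta_{X(v)}$ is an isomorphism in $\cat{A}$; since a morphism of representations is an isomorphism exactly when it is a vertexwise isomorphism, $\bar\eta_X$ is an isomorphism. Axiom (SF6) is handled analogously using Proposition~\ref{Rep-i}(i). This establishes that $(\bar S,\bar T)$ is a Sharp--Foxby adjunction on $\cat{B}$, and the claimed triangulated equivalence $\sGProj(\cat{B})\simeq\sGInj(\cat{B})$ follows from Corollary~\ref{cor:SF-imples-GProj-GInj}.

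The main obstacle is already absorbed into the preparatory Propositions~\ref{Rep-p} and \ref{Rep-i}: once these vertexwise characterizations of $\Up[\cat{B}], \Cp[\cat{B}], \Wp[\cat{B}], \Fp[\cat{B}]$ (and their injective counterparts) are in place, the verification of the six axioms is purely formal. The only delicate point is matching the rooted hypotheses: the projective-side characterization needs $Q$ left rooted while the injective-side characterization needs $Q$ right rooted, which is precisely why Theorem~B and the present statement assume that $Q$ is \emph{both} left and right rooted.
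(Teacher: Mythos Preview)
Your proposal is correct and matches the paper's own proof, which simply reads ``This follows from Propositions~\ref{Rep-p} and \ref{Rep-i} (as in the proof of Theorem~\ref{Ch}).'' You have merely spelled out the details that the paper leaves implicit, and your observation about needing $Q$ to be both left and right rooted (so that both propositions apply) is exactly the point behind the hypothesis in Theorem~B.
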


\begin{proof}
  This follows from Propositions~\ref{Rep-p} and \ref{Rep-i} (as in the proof of Theorem~\ref{Ch}).
\end{proof}

\begin{example}
As in Examples \ref{Auslander-Bass} and \ref{ex.ch}, we let $A$ be a commutative noetherian ring with a dualizing module. Theorem~\ref{Rep} shows that for any quiver $Q$ which is both left rooted and right rooted, the category $\cat{B}=\rep{Q,\mathrm{Mod}(A)}$ admits a Sharp--Foxby adjunction and~thus $\sGProj(\cat{B})$ and $\sGInj(\cat{B})$ are equivalent as triangulated categories. 
\end{example}

\begin{remark}
  The proof of Theorem~\ref{Rep} (which relies on the proofs of Propositions~\ref{Rep-p}~and \ref{Rep-i}) reveals that one does not need to assume finiteness of $\FPD(\class A)$ and $\FID(\class A)$ in the case where the quiver $Q$ has only finitely many vertices. 
\end{remark}

\enlargethispage{7.5ex}

\def\soft#1{\leavevmode\setbox0=\hbox{h}\dimen7=\ht0\advance \dimen7
  by-1ex\relax\if t#1\relax\rlap{\raise.6\dimen7
  \hbox{\kern.3ex\char'47}}#1\relax\else\if T#1\relax
  \rlap{\raise.5\dimen7\hbox{\kern1.3ex\char'47}}#1\relax \else\if
  d#1\relax\rlap{\raise.5\dimen7\hbox{\kern.9ex \char'47}}#1\relax\else\if
  D#1\relax\rlap{\raise.5\dimen7 \hbox{\kern1.4ex\char'47}}#1\relax\else\if
  l#1\relax \rlap{\raise.5\dimen7\hbox{\kern.4ex\char'47}}#1\relax \else\if
  L#1\relax\rlap{\raise.5\dimen7\hbox{\kern.7ex
  \char'47}}#1\relax\else\message{accent \string\soft \space #1 not
  defined!}#1\relax\fi\fi\fi\fi\fi\fi} \def\cprime{$'$}
  \providecommand{\arxiv}[2][AC]{\mbox{\href{http://arxiv.org/abs/#2}{\sf
  arXiv:#2 [math.#1]}}}
  \providecommand{\oldarxiv}[2][AC]{\mbox{\href{http://arxiv.org/abs/math/#2}{\sf
  arXiv:math/#2
  [math.#1]}}}\providecommand{\MR}[1]{\mbox{\href{http://www.ams.org/mathscinet-getitem?mr=#1}{#1}}}
  \renewcommand{\MR}[1]{\mbox{\href{http://www.ams.org/mathscinet-getitem?mr=#1}{#1}}}
\providecommand{\bysame}{\leavevmode\hbox to3em{\hrulefill}\thinspace}
\providecommand{\MR}{\relax\ifhmode\unskip\space\fi MR }
\providecommand{\MRhref}[2]{%
  \href{http://www.ams.org/mathscinet-getitem?mr=#1}{#2}
}
\providecommand{\href}[2]{#2}

\enlargethispage{6.5ex}

\end{document}